\newtheorem{theorem}{Theorem}
\newtheorem{lemma}[theorem]{Lemma}
\newtheorem{proposition}[theorem]{Proposition}
\newtheorem{definition}[theorem]{Definition}
\theoremstyle{definition}
\newtheorem{remark}[theorem]{Remark}
\numberwithin{theorem}{section}
\numberwithin{equation}{section}
\numberwithin{table}{section}
\newcommand{\bb}{\mathbb}
\newcommand{\Z}{\bb{Z}}
\newcommand{\R}{\bb{R}}
\newcommand{\cC}{\mathcal{C}}
\newcommand{\cP}{\mathcal{P}}
\newcommand{\cQ}{\mathcal{Q}}
\newcommand{\de}{\delta}
\newcommand{\De}{\Delta}
\newcommand{\Ga}{\Gamma}
\newcommand{\La}{\Lambda}
\newcommand{\var}{\varphi}
\newcommand{\Sig}{\Sigma}
\newcommand{\sig}{\sigma}
\newcommand{\pr}{\prime}
\newcommand{\sm}{\setminus}
\newcommand{\lan}{\langle}
\newcommand{\ran}{\rangle}
\newcommand{\wh}{\widehat}
\newcommand{\Aut}{\text{Aut}}
\newcommand{\Sym}{\text{Sym}}
\newcommand{\covol}{\text{covol}}
\newcommand{\vol}{\text{vol}}
\long\def\symbolfootnote[#1]#2{\begingroup%
\def\thefootnote{\fnsymbol{footnote}}\footnote[#1]{#2}\endgroup}
\begin{document}

\title{A Direct Construction of Primitive Formally Dual Pairs Having Subsets with Unequal Sizes}
\author{Shuxing Li \and Alexander Pott}
\date{}
\maketitle

\symbolfootnote[0]{
S.~Li and A.~Pott are with the Faculty of Mathematics, Otto von Guericke University Magdeburg, 39106 Magdeburg, Germany (e-mail: shuxing.li@ovgu.de, alexander.pott@ovgu.de).
}

\begin{abstract}
The concept of formal duality was proposed by Cohn, Kumar and Sch\"urmann, which reflects a remarkable symmetry among energy-minimizing periodic configurations. This formal duality was later translated into a purely combinatorial property by Cohn, Kumar, Reiher and Sch\"urmann, where the corresponding combinatorial objects were called formally dual pairs. So far, except the results presented in \cite{LP}, we have little information about primitive formally dual pairs having subsets with unequal sizes. In this paper, we propose a direct construction of primitive formally dual pairs having subsets with unequal sizes in $\Z_2 \times \Z_4^{2m}$, where $m \ge 1$. This construction recovers an infinite family obtained in \cite{LP}, which was derived by employing a recursive approach. Although the resulting infinite family was known before, the idea of the direct construction is new and provides more insights which were not known from the recursive approach.

\smallskip
\noindent \textbf{Keywords.} Direct construction, energy minimization, formal duality, periodic configuration, primitive formally dual pair.

\noindent {{\bf Mathematics Subject Classification\/}: 05B40, 52C17, 20K01.}
\end{abstract}

\section{Introduction}

Let $\cC$ be a particle configuration in the Euclidean space $\R^n$. Let $f: \R^n \rightarrow \R$ be a potential function, which is used to measure the energy possessed by $\cC$. The energy minimization problem aims to find configurations $\cC \subset \R^n$ with a fixed density, whose energy is minimal with respect to a potential function $f$. In physics, the energy minimization problem amounts to find the ground states in a given space, with respect to a prescribed density and potential function. This problem is of great interest and notoriously difficult in general \cite[Section I]{CKS}. For instance, the famous sphere packing problem can be viewed as an extremal case of the energy minimization problem \cite[p. 123]{CKRS}.

In 2009, Cohn, Kumar and Sch\"{u}rmann considered a weaker version of the energy minimization problem, where the configurations under consideration are restricted to so called periodic configurations \cite{CKS}. A periodic configuration is formed by a union of finitely many translations of a lattice. For instance, let $\La$ be a lattice in $\R^n$, then $\cP=\bigcup_{i=1}^N (v_i+\La)$ is a periodic configuration formed by $N$ translations of $\La$. The \emph{density} of $\cP$ is defined to be $\de(\cP)=N/\covol(\La)$, where $\covol(\La)=\vol(\R^n/\La)$ is the volume of a fundamental domain of $\La$. Given a potential function $f: \R^n \rightarrow \R$, define its Fourier transformation
$$
\wh{f}(y)=\int_{\R^n}f(x)e^{-2\pi i\lan x,y \ran}dx,
$$
where $\lan \cdot, \cdot \ran$ is the inner product in $\R^n$. The potential functions belong to the class of Schwartz function, so that their Fourier transformations are well-defined. For a Schwartz function $f: \R^n \rightarrow \R$ and a periodic configuration $\cP=\bigcup_{j=1}^N(v_j+\La)$ associated with a lattice $\La \subset \R^n$, define the \emph{average pair sum} of $f$ over $\cP$ as
$$
\Sig_f(\cP)=\frac{1}{N}\sum_{j,\ell=1}^N\sum_{x \in \La}f(x+v_j-v_\ell),
$$
which is used to measure the energy possessed by the periodic configuration $\cC$ with respect to the potential function $f$. Given a density $0<\de<1$ and a Schwartz potential function $f$, the energy minimization problem concerning periodic configurations aims to find periodic configurations $\cP$ so that $\Sig_f(\cP)$ is minimal and $\de(\cP)=\de$.

Based on numerical experiments, Cohn et al. observed that each energy-minimizing periodic configuration obtained in their simulations possesses a remarkable symmetry called formal duality \cite[Section VI]{CKS}. More precisely, if $\cP$ is an energy-minimizing periodic configuration, then numerous experiments suggested that there exists a periodic configuration $\cQ$, so that for \emph{each} Schwartz function $f$, we have
\begin{equation}\label{def-formaldual}
\Sig_f(\cP)=\de(\cP)\Sig_{\wh{f}}(\cQ).
\end{equation}
If two periodic configurations $\cP$ and $\cQ$ satisfy \eqref{def-formaldual} for each Schwartz function $f$, then they are called formally dual to each other \cite[Definition 2.1]{CKRS}. This formal duality among periodic configurations revealed a deep symmetry which has not been well understood.

Remarkably, Cohn, Kumar, Reiher and Sch\"{u}rmann realized that formal duality among a pair of periodic configurations can be translated into a purely combinatorial property \cite[Theorem 2.8]{CKRS}. Indeed, they introduced the concept of formally dual pairs in finite abelian groups, which is a combinatorial counterpart of formal duality \cite[Definition 2.9]{CKRS}. Let $\La \subset \R^n$ be a lattice with a basis containing $n$ vectors. The dual lattice of $\La$ is defined as
$$
\La^*=\{x \in \R^n \mid \lan x, y \ran \in \Z, \forall y \in \La \},
$$
in which $\lan \cdot, \cdot \ran$ is the inner product in $\R^n$. Let $\cP=\bigcup_{j=1}^N(v_j+\La)$ and $\cQ=\bigcup_{j=1}^M(w_j+\Ga)$ be two periodic configurations. Define $\cP-\cP$ to be the subset $\{ x-y \mid x,y \in \cP \}$. Suppose $\cP-\cP \subset \Ga^*$ and $\cQ-\cQ \subset \La^*$. Then, as observed in \cite[p. 129]{CKRS}, the two quotient groups $\Ga^*/\La$ and $\La^*/\Ga$ satisfy that $\Ga^*/\La \cong \La^*/\Ga \cong G$, where $G$ is a finite abelian group. Moreover, the two sets $S=\{v_j \mid 1 \le j \le N\}$ and $T=\{w_j \mid 1 \le j \le M\}$ can be regarded as subsets of $G$, so that $S$ corresponds to $\cP$ and $T$ corresponds to $\cQ$. Cohn et al.'s key observation was that, $\cP$ and $\cQ$ are formally dual if and only if $S$ and $T$ form a formally dual pair in $G$ (see Definition~\ref{def-iso} for the concept of formally dual pairs).
Consequently, the formal duality among periodic configurations $\cP$ and $\cQ$ was reduced to the property of a pair of subsets $S$ and $T$ in a finite abelian group $G$.

Hence, Cohn et al.'s results paved the way of applying combinatorial approach to deal with energy-minimizing periodic configurations. On one hand, let $S=\{v_j \mid 1 \le j \le N\}$ and $T=\{w_j \mid 1 \le j \le M\}$ be a formally dual pair in a finite abelian group $G$. Then for each pair of lattices $\La$ and $\Ga$, satisfying $\Ga^*/\La \cong \La^*/\Ga \cong G$, we have that $\cP=\bigcup_{j=1}^N (v_j+\La)$ and $\cQ=\bigcup_{j=1}^M (w_j+\Ga)$ are formally dual periodic configurations. Hence, given a formally dual pair $S$ and $T$ in $G$, by choosing proper underlying lattices $\La$ and $\Ga$, we can derive infinitely many formally dual periodic configurations $\cP$ and $\cQ$, which are natural candidates of energy-minimizing periodic configurations. On the other hand, let $\La$ and $\Ga$ be two lattices satisfying that $\Ga^*/\La \cong \La^*/\Ga \cong G$, where $G$ is a finite abelian group. Let $\cP$ be a periodic configuration associated with the lattice $\La$ and $\cQ$ be a periodic configuration associated with the lattice $\Ga$, such that $\cP-\cP \subset \Ga^*$ and $\cQ-\cQ \subset \La^*$. The nonexistence of formally dual pairs in $G$ implies that no matter how the periodical configurations $\cP$ and $\cQ$ are formed by taking the union of cosets of $\La$ and $\Ga$, they can never be formally dual. Hence, the nonexistence of formally dual pairs in one finite abelian group $G$ rules out infinitely many potential pairs of formally dual periodic configurations. In a word, formally dual pairs capture the essential information of formally dual periodic configurations, and therefore, offers an elegant combinatorial way to study the formal duality of periodic configurations.

Now we give a brief summary of known results about formally dual pairs. The pioneering works \cite{CKRS,CKS} included some fundamental results and proposed a main conjecture \cite[p. 135]{CKRS}, stating that there are no primitive formally dual pairs in cyclic groups, except two small examples (see Definition~\ref{def-primi} for the concept of primitive formally dual pairs). Motivated by this conjecture, some follow-up works studied formally dual pairs in cyclic groups. Specifically, this conjecture was proved for cyclic groups of prime power order, where Sch\"uler confirmed the odd prime power case \cite{Schueller} and Xia confirmed the even prime power case \cite{Xia}. Malikiosis showed that the conjecture holds true in many cases when the order of the cyclic group is a product of two prime powers \cite{Mali}. Remarkably, his results employed the field descent method, a deep number theoretical approach which has been used to achieve significant progress in the Barker sequence conjecture \cite{LS,Sch}. In \cite[Section 4.2]{LPS}, the authors proposed a new viewpoint towards the conjecture, by building a connection between the two known examples of primitive formally dual pairs in cyclic groups and cyclic relative difference sets.

While there seem to be very few formally dual pairs in cyclic groups, it is natural to ask what is the situation for finite abelian groups. A systematic study of formally dual pairs in finite abelian groups was presented in \cite{LPS}, which contains constructions, classifications, nonexistence results and enumerations. In particular, the first example of primitive formally dual pairs having subsets with unequal sizes was discovered in \cite[Example 3.22]{LPS}, which belongs to the group $\Z_2\times\Z_4^2$. Motivated by this example, the authors constructed many infinite families of such primitive formally dual pairs in \cite{LP}. Indeed, for $m \ge 2$, the authors obtained $m+1$ pairwise inequivalent primitive formally dual pairs in $\Z_2 \times \Z_4^{2m}$, which have subsets with unequal sizes (see Definition~\ref{def-primi} for the concept of inequivalence).

In \cite[Theorem 6.2]{LP}, the authors presented an infinite family of primitive formally dual pair having subsets with unequal sizes. More precisely, the authors used a recursive approach to generate a primitive formally dual pair $S$ and $T$ in $\Z_2 \times \Z_4^{2m}$, $m \ge 1$, such that $|S|=2^{2m}$ and $|T|=2^{2m+1}$. Instead, in this paper, we give a direct construction which exactly recovers this family. This direct construction offers more insights into the construction of primitive formally dual pair having subsets with unequal sizes, which suggests the possibility of more direct constructions. Moreover, it reveals more detailed information about this family, so that the difference spectrum of $T$ can be determined (see the paragraph after Definition~\ref{def-equiv} for the concept of difference spectrum).

The rest of the paper is organized as follows. In Section~\ref{sec2}, we give a brief introduction to formally dual pairs and decribe a lifting construction framework producing new primitive formally dual pairs from known ones. Applying this framework in Section~\ref{sec3}, we present a direct construction of primitive formally dual pairs in $\Z_2 \times \Z_4^{2m}$, which reproduces the infinite family presented in \cite[Theorem 6.2]{LP} and reveals more detailed information about it. Section~\ref{sec4} concludes the paper.

\section{Preliminaries}\label{sec2}

Throughout the paper, we always consider finite abelian groups $G$. Let $A_1$ and $A_2$ be two subsets of a group $G$. For each $y \in G$, define the \emph{weight enumerator} of $A_1$ and $A_2$ at $y$ as
$$
\nu_{A_1,A_2}(y)=|\{(a_1,a_2) \in A_1 \times A_2 \mid y=a_1a_2^{-1}\}|.
$$
When $A_1=A_2$, we simply write $\nu_{A_1,A_2}(y)$ as $\nu_{A_1}(y)$.

We use $\Z[G]$ to denote the group ring. For $A \in \Z[G]$ with  nonnegative coefficients, we use $\{A\}$ to denote the underlying subset of $G$ corresponding to the elements of $A$ with positive coefficients and $[A]$ the multiset corresponding to $A$. For a subset $B$ of $G$, the inclusion $B \subset [A]$ means each element of $B$ occurs at least once in the multiset $[A]$. For $A \in \Z[G]$ and $g \in G$, we use $[A]_g$ to denote the coefficient of $g$ in $A$. Suppose $A=\sum_{g \in G} a_gg \in \Z[G]$, then $A^{(-1)}$ is defined to be $\sum_{g \in G} a_gg^{-1}$. Suppose $A=\sum_{g \in G} a_gg \in \Z[G]$ and $B=\sum_{g \in G} b_gg \in \Z[G]$, then the product $AB$ is defined to be $\sum_{g \in G} (\sum_{h \in G}a_{gh^{-1}}b_h) g$. A \emph{character} $\chi$ of $G$ is a group homomorphism from $G$ to the multiplicative group of the complex field $\mathbb{C}$. For a group $G$, we use $\wh{G}$ to denote its character group. There exists a group isomorphism $\De:G \rightarrow \wh{G}$, such that for each $y \in G$, we have $\chi_y:=\De(y) \in \wh{G}$. Therefore, $\wh{G}=\{\chi_y \mid y \in G\}$. For $\chi \in \wh{G}$ and $A \in \Z[G]$, we use $\chi(A)$ to denote the character sum $\sum_{x \in A} \chi(x)$. For a more detailed treatment of group rings and characters, please refer to \cite[Chapter 1]{Pott95}.

Now we are ready to define formally dual pairs.

\begin{definition}[Formally dual pair]\label{def-iso}
Let $\De$ be a group isomorphism from $G$ to $\wh{G}$, such that $\De(y)=\chi_y$ for each $y \in G$. Let $S$ and $T$ be subsets of $G$. Then $S$ and $T$ form a formally dual pair in $G$ under the isomorphism $\De$, if for each $y \in G$,
\begin{equation}\label{eqn-def}
|\chi_y(S)|^2=\frac{|S|^2}{|T|}\nu_T(y).
\end{equation}
\end{definition}

\begin{remark}\label{rem-def}
\quad
\begin{itemize}
\item[(1)] According to \cite[Remark 2.10]{CKRS}, the roles of the two subsets $S$ and $T$ in a formally dual pair are interchangeable, in the sense that \eqref{eqn-def} holds for each $y \in G$, if and only if
    \begin{equation}\label{eqn-def2}
      |\chi_y(T)|^2=\frac{|T|^2}{|S|}\nu_S(y)
    \end{equation}
    holds for each $y \in G$.
\item[(2)] By Definition~\ref{def-iso}, formal duality depends only on $SS^{(-1)}$ and $TT^{(-1)}$. For each $g_1, g_2\in G$, suppose that $S^{\pr}=\{g_1x \mid x \in S\}$ is a translation of $S$ and $T^{\pr}=\{g_2x \mid x \in T\}$ is a translation of $T$. Then $S^{\pr}$ and $T^{\pr}$ also form a formally dual pair in $G$. Hence, formal duality is invariant under translation.
\item[(3)] By \cite[Proposition 2.9]{LPS}, we know that $S$ and $T$ form a formally dual pair in $G$ under the isomorphism $\De_1$ if and only if $S$ and $\De_2^{-1}(\De_1(T))$ form a formally dual pair in $G$ under the isomorphism $\De_2$. Thus, Definition~\ref{def-iso} does not depend on the specific choice of $\De$. From now on, by referring to a formally dual pair, we always assume a proper group isomorphism is chosen. In our concrete constructions below, we always use a group isomorphism $\De: G \rightarrow \wh{G}$, such that $\De(y)=\chi_y$ for each $y \in G$. Therefore, once we specify how the character $\chi_y$ is defined, the group isomorphism $\De$ follows immediately.
\item[(4)] By \cite[Theorem 2.8]{CKRS}, we must have $|G|=|S|\cdot|T|$. Hence, a formally dual pair in a group of nonsquare order, must contain two subsets with unequal sizes.
\end{itemize}
\end{remark}

To exclude some trivial examples of formally dual pairs, the concept of primitive formally dual pair was proposed in \cite[p. 134]{CKRS}.

\begin{definition}[Primitive formally dual pair]\label{def-primi}
For a subset $S$ of a group $G$, define $S$ to be a primitive subset of $G$, if $S$ is not contained in a coset of a proper subgroup of $G$ and $S$ is not a union of cosets of a nontrivial subgroup in $G$. For a formally dual pair $S$ and $T$ in $G$, it is a primitive formally dual pair, if both $S$ and $T$ are primitive subsets.
\end{definition}

A subset $S \subset G$ is called a (primitive) formally dual set in $G$, if there exists a subset $T \subset G$, such that $S$ and $T$ form a (primitive) formally dual pair in $G$. The following definition concerns the equivalence of formally dual pairs \cite[Definition 2.17]{LPS}. Given a group $G$, we use $\Aut(G)$ to denote its automorphism group.

\begin{definition}[Equivalence of formally dual pair]\label{def-equiv}
Let $S$ and $S^\pr$ be two formally dual sets in $G$. They are equivalent if there exist $g \in G$ and $\phi \in \Aut(G)$, such that
$$
S^\pr=g\phi(S).
$$
Moreover, let $S,T$ and $S^{\pr},T^{\pr}$ be two formally dual pairs in $G$. They are equivalent if one of $S$ and $T$ is equivalent to one of $S^{\pr}$ and $T^{\pr}$.
\end{definition}

As noted in Definition~\ref{def-equiv}, the equivalence of formally dual pairs can be reduced to the equivalence of formally dual sets. For $A \in \Z[G]$, the multiset
$$
[[AA^{(-1)}]_g \mid g \in G]
$$
is called the \emph{difference spectrum} of $A$. The multiset
$$
[|\chi(A)|^2 \mid \chi \in \wh{G}]
$$
is called the \emph{character spectrum} of $A$. The difference spectrum and character spectrum contain very detailed information about the formally dual pairs. Indeed, both of them are invariants with respect to the equivalence of formally dual sets.

Next, we mention a very powerful product construction.

\begin{proposition}[Product construction]{\rm \cite[Proposition 2.7]{LP}}\label{prop-prod}
Let $S_1$ and $T_1$ be a primitive formally dual pair in $G_1$. Let $S_2$ and $T_2$ be a primitive formally dual pair in $G_2$. Then $S_1 \times S_2$ and $T_1 \times T_2$ form a primitive formally dual pair in $G_1 \times G_2$.
\end{proposition}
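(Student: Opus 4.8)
The plan is to verify, in turn, the two ingredients of a primitive formally dual pair for the sets $S_1 \times S_2$ and $T_1 \times T_2$: first the defining character identity \eqref{eqn-def}, then the primitivity of each of these two subsets. Both parts rest on the factorization of characters of a direct product: every $\chi \in \wh{G_1 \times G_2}$ is of the form $\chi = \chi^{(1)} \otimes \chi^{(2)}$ with $\chi^{(i)} \in \wh{G_i}$, which identifies $\wh{G_1 \times G_2}$ with $\wh{G_1} \times \wh{G_2}$. Accordingly I would take the group isomorphism $\De$ on $G_1 \times G_2$ to be the product of the ones already fixed for $G_1$ and $G_2$, so that $\chi_{(y_1,y_2)} = \chi_{y_1} \otimes \chi_{y_2}$; by Remark~\ref{rem-def}(3) this is no loss of generality.

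For the character identity, note that $\chi_{(y_1,y_2)}(S_1 \times S_2) = \chi_{y_1}(S_1)\,\chi_{y_2}(S_2)$, so $|\chi_{(y_1,y_2)}(S_1 \times S_2)|^2 = |\chi_{y_1}(S_1)|^2\,|\chi_{y_2}(S_2)|^2$; similarly the weight enumerator factors, $\nu_{T_1 \times T_2}(y_1,y_2) = \nu_{T_1}(y_1)\,\nu_{T_2}(y_2)$, since a pair in $(T_1\times T_2)^2$ contributes to $(y_1,y_2)$ exactly when its two coordinate pairs contribute to $y_1$ and to $y_2$ respectively. Substituting the hypotheses $|\chi_{y_i}(S_i)|^2 = \tfrac{|S_i|^2}{|T_i|}\,\nu_{T_i}(y_i)$ and using $|S_1\times S_2| = |S_1|\,|S_2|$ and $|T_1\times T_2| = |T_1|\,|T_2|$ gives \eqref{eqn-def} for $S_1 \times S_2$ and $T_1 \times T_2$. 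This step is a routine computation.

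For primitivity I would check the two conditions of Definition~\ref{def-primi} separately. That $S_1 \times S_2$ is not contained in a coset of a proper subgroup amounts to saying that the underlying set of $(S_1\times S_2)(S_1\times S_2)^{(-1)}$, namely $\{S_1S_1^{(-1)}\} \times \{S_2S_2^{(-1)}\}$, generates $G_1 \times G_2$; since each $S_i$ is primitive, $\{S_iS_i^{(-1)}\}$ generates $G_i$, and since the identity of $G_i$ always lies in $\{S_iS_i^{(-1)}\}$, the product set contains a copy of $\{S_1S_1^{(-1)}\}$ (paired with the identity of $G_2$) and a copy of $\{S_2S_2^{(-1)}\}$ (paired with the identity of $G_1$), and these together generate all of $G_1 \times G_2$. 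That $S_1 \times S_2$ is not a union of cosets of a nontrivial subgroup I would get from the standard fact that a subset $A \subseteq G$ is a union of cosets of a nontrivial subgroup if and only if $\{\chi \in \wh G : \chi(A) \ne 0\}$ is contained in a proper subgroup of $\wh G$: because $\chi_{(y_1,y_2)}(S_1\times S_2) \ne 0$ iff $\chi_{y_1}(S_1)\ne 0$ and $\chi_{y_2}(S_2)\ne 0$, because each set $\{\chi^{(i)} : \chi^{(i)}(S_i)\ne 0\}$ contains the trivial character, and because (as $S_i$ is primitive) it is not contained in a proper subgroup of $\wh{G_i}$, the same ``coordinate axes'' argument shows $\{\chi : \chi(S_1\times S_2)\ne 0\}$ generates $\wh{G_1\times G_2}$. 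The identical reasoning handles $T_1 \times T_2$; alternatively, once \eqref{eqn-def} holds for the product one has $\{\chi : \chi(S_1\times S_2)\ne 0\} = \De\bigl((T_1\times T_2)(T_1\times T_2)^{(-1)}\bigr)$ inside $\wh{G_1\times G_2}$, so the ``union of cosets'' condition for $S_1\times S_2$ follows directly from the generation statement just proved for $(T_1\times T_2)(T_1\times T_2)^{(-1)}$.

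The only genuinely delicate point is the ``union of cosets'' half of primitivity, since that property is not obviously controlled by the coordinate projections $G_1\times G_2 \to G_i$; it has to be recast in a product-compatible form, via character supports (or, equivalently here, via the difference set of the dual partner). Everything else is bookkeeping with the factorization of characters.
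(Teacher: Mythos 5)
Your argument is correct: the character factorization over $G_1 \times G_2$ gives \eqref{eqn-def} for $S_1\times S_2$ and $T_1\times T_2$ since both $|\chi_{(y_1,y_2)}(S_1\times S_2)|^2$ and $\nu_{T_1\times T_2}(y_1,y_2)$ factor coordinatewise, and both halves of primitivity are handled soundly (coset containment via the generation of the group by $\{S_1S_1^{(-1)}\}\times\{S_2S_2^{(-1)}\}$, and the union-of-cosets condition via the character-support criterion). Note that this paper only quotes the statement from \cite[Proposition 2.7]{LP} without reproducing its proof, and your route is essentially the standard one used there; in particular your closing observation that \eqref{eqn-def} forces $\{\chi_y : \chi_y(S)\ne 0\}=\De(\{TT^{(-1)}\})$, so that the union-of-cosets condition for one set reduces to the coset-containment condition for its partner, is exactly the mechanism by which the cited reference disposes of the delicate half of primitivity.
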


Finally, we give a brief account of a lifting construction framework raised in \cite[Section 3]{LP}, which generates new primitive formally dual pairs from known ones. It is worthy noting that this lifting construction framework led to the first infinite family of primitive formally dual pairs which are formed by two subsets having unequal sizes \cite[Theorem 4.2]{LP}.

Let $G$ be a group of square order. Let $S$ and $T$ be a primitive formally dual pair in $G$ under the isomorphism $\De$, with $\De(y)=\chi_y$ for each $y \in G$. Suppose $|S|=|T|=\sqrt{|G|}$ and $S$ can be partitioned into two subsets $S_0$ and $S_1$. Define two subsets $S^\pr, T^\pr \subset \Z_2 \times G$ as follows:
\begin{align}
\begin{aligned}\label{eqn-lifting}
S^\pr&=\{(0,x) \mid x \in S_0\} \cup \{(1,x) \mid x \in S_1\}, \\
T^\pr&=\{(0,x) \mid x \in T\} \cup \{(1,x) \mid x \in T^{(-1)}\}.
\end{aligned}
\end{align}
Clearly, $|S^\pr|=\sqrt{|G|}$ and $|T^\pr|=2\sqrt{|G|}$.

Equation \eqref{eqn-lifting} describes a lifting construction framework so that we can use a primitive formally dual pair $S$ and $T$ in $G$ with $|S|=|T|$ as a starter, and generate a new formally dual pair $S^\pr$ and $T^\pr$ in $\Z_2 \times G$ with $|S^\pr|\ne|T^\pr|$. Indeed, a necessary and sufficient condition ensuring that $S^\pr$ and $T^\pr$ form a formally dual pair in $\Z_2 \times G$ is known.

\begin{proposition}{\rm\cite[Corollary 3.4]{LP}}\label{prop-lifting}
Let $S^{\pr}$ and $T^{\pr}$ be the subsets defined in \eqref{eqn-lifting}. Then $S^{\pr}$ and $T^{\pr}$ form a primitive formally dual pair in $\Z_2 \times G$ if and only if
\begin{equation*}
|\chi_z(T+T^{(-1)})|^2=\frac{4|T|^2}{|S|}(\nu_{S_0}(z)+\nu_{S_1}(z)), \quad \mbox{for each $z \in G$}.
\end{equation*}
\end{proposition}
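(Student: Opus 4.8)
The plan is to pass to characters of $\Z_2 \times G$, reduce the formal‑duality condition to the given formal duality of $S$ and $T$ in $G$, and then deal with primitivity separately. Every character of $\Z_2 \times G$ has the form $\chi_{(\de,z)}(a,x) = (-1)^{\de a}\chi_z(x)$ with $\de \in \{0,1\}$ and $z \in G$, so from \eqref{eqn-lifting} one reads off $\chi_{(\de,z)}(S^\pr) = \chi_z(S_0) + (-1)^\de\chi_z(S_1)$ and $\chi_{(\de,z)}(T^\pr) = \chi_z(T) + (-1)^\de\chi_z(T^{(-1)})$. By Remark~\ref{rem-def}(1), $S^\pr$ and $T^\pr$ form a formally dual pair in $\Z_2 \times G$ if and only if $|\chi_{(\de,z)}(T^\pr)|^2 = \frac{|T^\pr|^2}{|S^\pr|}\,\nu_{S^\pr}((\de,z))$ for every $(\de,z)$; since $|S^\pr| = |S| = |T|$ and $|T^\pr| = 2|T|$, the constant $\frac{|T^\pr|^2}{|S^\pr|}$ equals $4|T| = \frac{4|T|^2}{|S|}$, which is exactly the constant in the asserted identity.

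Next I would compute the two sides of this test equation at $\de = 0$ and at $\de = 1$. Using $\chi_z(T^{(-1)}) = \overline{\chi_z(T)}$, the $\de = 0$ character sum is $\chi_{(0,z)}(T^\pr) = \chi_z(T + T^{(-1)})$, and the parallelogram law gives $|\chi_{(0,z)}(T^\pr)|^2 + |\chi_{(1,z)}(T^\pr)|^2 = 2|\chi_z(T)|^2 + 2|\chi_z(T^{(-1)})|^2 = 4|\chi_z(T)|^2$. On the combinatorial side, sorting the ordered pairs counted by $\nu_{S^\pr}$ according to their $\Z_2$-coordinates gives $\nu_{S^\pr}((0,z)) = \nu_{S_0}(z) + \nu_{S_1}(z)$, while expanding $SS^{(-1)} = S_0 S_0^{(-1)} + S_1 S_1^{(-1)} + S_0 S_1^{(-1)} + S_1 S_0^{(-1)}$ in $\Z[G]$ gives $\nu_{S^\pr}((1,z)) = \nu_S(z) - \nu_{S_0}(z) - \nu_{S_1}(z)$; in particular $\nu_{S^\pr}((0,z)) + \nu_{S^\pr}((1,z)) = \nu_S(z)$.

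Then I would feed in that $S$ and $T$ form a formally dual pair in $G$, which by Remark~\ref{rem-def}(1) means $|\chi_z(T)|^2 = \frac{|T|^2}{|S|}\nu_S(z) = |T|\nu_S(z)$ for every $z \in G$. With this, the $\de = 0$ test equation becomes precisely $|\chi_z(T + T^{(-1)})|^2 = 4|T|(\nu_{S_0}(z) + \nu_{S_1}(z))$, i.e.\ the displayed identity; and since $|\chi_{(0,z)}(T^\pr)|^2 + |\chi_{(1,z)}(T^\pr)|^2 = 4|\chi_z(T)|^2 = 4|T|\big(\nu_{S^\pr}((0,z)) + \nu_{S^\pr}((1,z))\big)$, the $\de = 1$ test equation holds if and only if the $\de = 0$ one does. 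So, for each $z$, both test equations hold exactly when the displayed identity holds at $z$, and therefore the whole system of $2|G|$ defining equations for $S^\pr, T^\pr$ is equivalent to the single displayed family. This settles the ``formally dual pair'' part of the statement in both directions.

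What remains, and what I expect to be the main obstacle, is to show that the displayed condition already forces $S^\pr$ and $T^\pr$ to be primitive in the sense of Definition~\ref{def-primi}. Here I would work through the subgroups $H \le \Z_2 \times G$: each one either contains the first direct factor $\Z_2$, or meets it trivially and then projects isomorphically onto a subgroup of $G$. Most ways in which $S^\pr$ or $T^\pr$ could fail primitivity descend, via this projection or via intersection with the $\Z_2$-factor, to a failure of primitivity of $S$ or $T$ in $G$, or else contradict $S_0 \cap S_1 = \es$. The genuinely new cases are those governed by an index-$2$ subgroup $N \le G$: namely $T = T^{(-1)}$, and $S_0, S_1$ lying in distinct cosets of $N$. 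The first is excluded cleanly, since with $|\chi_z(T)|^2 = |T|\nu_S(z)$ the displayed identity would then give $\nu_S = \nu_{S_0} + \nu_{S_1}$, equivalently $S_0 S_1^{(-1)} + S_1 S_0^{(-1)} = 0$ in $\Z[G]$, which is impossible for nonempty $S_0$ and $S_1$. The second case I would attack by noting that then $\nu_{S_0}, \nu_{S_1}$ are supported on $N$ while $\nu_S$ is not, so the displayed identity forces $\chi_z(T + T^{(-1)}) = 0$ for all $z \notin N$; combining this with $|\chi_z(T)|^2 = |T|\nu_S(z)$ and a Parseval count over the coset $G \setminus N$ pins down the difference spectrum of $T$ tightly enough to contradict the primitivity of $T$. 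Establishing this last implication carefully is the crux; everything before it is a routine, if somewhat lengthy, character computation.
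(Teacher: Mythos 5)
Your character computation for the ``formally dual'' half of the statement is correct and complete: the identification $\frac{|T^\pr|^2}{|S^\pr|}=4|T|=\frac{4|T|^2}{|S|}$, the counts $\nu_{S^\pr}((0,z))=\nu_{S_0}(z)+\nu_{S_1}(z)$ and $\nu_{S^\pr}((1,z))=\nu_S(z)-\nu_{S_0}(z)-\nu_{S_1}(z)$, and the parallelogram-law observation that the sum of the $\de=0$ and $\de=1$ test equations is an identity (given that $S,T$ are formally dual in $G$ with $|S|=|T|$), so that either family of equations implies the other, are all sound. Note that the present paper contains no proof of this proposition at all --- it is quoted verbatim from \cite{LP} --- so there is no internal argument to compare against; your reduction matches the computation behind \cite[Theorem 3.3 and Corollary 3.4]{LP}.

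The genuine gap is the primitivity half, which you correctly identify as the crux and then leave unproved. Your case analysis does isolate the right residual cases: granting formal duality of $S^\pr,T^\pr$, primitivity reduces to showing that $T^\pr$ is not a union of cosets of any order-two subgroup $K=\{(0,e),(1,k)\}$ with $k^2=e$, i.e.\ that $kT\ne T^{(-1)}$; the case $k=e$ you dispose of correctly via $S_0S_1^{(-1)}+S_1S_0^{(-1)}=0$. But for $k\ne e$ your proposed finish does not work as described. If $S_0$ and $S_1$ lie in the two distinct cosets of the index-two subgroup $N=\{z\mid \chi_z(k)=1\}$, then the displayed identity says $(2\mathrm{Re}\,\chi_z(T))^2=4|T|\nu_S(z)=4|\chi_z(T)|^2$ on $N$ and $\mathrm{Re}\,\chi_z(T)=0$ off $N$, which by Fourier inversion is \emph{exactly equivalent} to $kT=T^{(-1)}$ --- it is a consistent configuration, not a contradiction, and in it $S^\pr,T^\pr$ genuinely form a formally dual pair that fails to be primitive. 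So no Parseval count over $G\setminus N$ based on the displayed identity and $|\chi_z(T)|^2=|T|\nu_S(z)$ alone can ``pin down the difference spectrum of $T$'' to a contradiction; what must actually be proved is that a \emph{primitive} formally dual pair $S,T$ in $G$ with $|S|=|T|=\sqrt{|G|}$ admits no involution $k\ne e$ with $kT=T^{(-1)}$ (equivalently, no index-two subgroup of $G$ separates $S_0=S\cap cN$ from $S_1=S\cap c^\pr N$ compatibly with the identity). That step, which is the entire non-routine content of \cite[Corollary 3.4]{LP} beyond the character computation, is missing from your argument; as written you have proved ``formally dual pair $\Leftrightarrow$ displayed identity'' but not the stated equivalence with ``primitive formally dual pair.''
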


\begin{remark}\label{rem-par}
To apply the lifting construction framework \eqref{eqn-lifting}, we need to deal with the following two crucial points:
\begin{itemize}
\item[(1)] Choose a proper initial primitive formally dual pair $S$ and $T$ in a group $G$, satisfying $|S|=|T|$.
\item[(2)] Find a proper partition of $S$ into $S_0$ and $S_1$.
\end{itemize}
\end{remark}

In the next section, we will employ the lifting construction framework \eqref{eqn-lifting} to construct primitive formally dual pairs in $\Z_2 \times \Z_4^{2m}$.

\section{A direct construction of primitive formally dual pairs in $\Z_2 \times \Z_4^{2m}$}\label{sec3}

In this section, we propose a direct construction to generate an infinite family of primitive formally dual pairs in $\Z_2 \times \Z_4^{2m}$, where the two subsets have unequal sizes. This family has been discovered in \cite[Theorem 6.2]{LP} using a recursive approach. We remark that the direct construction offers more insights to this infinite family.

Now we introduce some notation which will be used throughout the rest of this paper. First, we define the canonical characters on $\Z_4^n$ and $\Z_2 \times \Z_4^n$, which will be used later. For each $w \in \Z_2$,  the character $\var_w \in \wh{\Z_2}$ is defined as $\var_w(a)=(-1)^{wa}$ for each $a \in \Z_2$. For each $z=(z_1,z_2,\ldots,z_n) \in \Z_4^{n}$, define the character $\chi_z \in \wh{\Z_4^n}$ as $\chi_z(b)=(\sqrt{-1})^{z\cdot b}$ for each $b=(b_1,b_2,\ldots,b_n) \in \Z_4^{n}$, where $z\cdot b$ is defined as $\sum_{i=1}^n z_ib_i$. For each $(w,z) \in \Z_2 \times \Z_4^n$, define the character $\phi_{w,z} \in \wh{\Z_2 \times \Z_4^{n}}$ as $\phi_{w,z}((a,b))=\var_w(a)\chi_z(b)$ for each $(a,b) \in \Z_2 \times \Z_4^{n}$. Given a collection of sets $A_i$, $1 \le i \le l$, we use $\prod_{i=1}^l A_i$ to denote the Cartesian product of $A_i$'s.

We write a multiset as $[A]=[a_i \lan z_i \ran \mid 1 \le i \le t]$, which means for each $1 \le i \le t$, the element $a_i$ occurs $z_i$ times in $[A]$. For two nonnegative integers $a$ and $b$, we use $\binom{a}{b}$ to denote the usual binomial coefficient, namely,
$$
\binom{a}{b}=\begin{cases}
\frac{\prod_{i=0}^{b-1} (a-i)}{b!} & \mbox{if $b \le a$,} \\
0, & \mbox{if $b>a$.}
\end{cases}
$$

In order to describe our construction, we need more notation. Define
\begin{align*}
L&=\{(0,0),(0,1),(1,0),(3,3)\} \subset \Z_4^2 \\
L_1&=\{(0,0),(0,1),(1,0)\} \subset \Z_4^2 \\
L_2&=\{(3,3)\} \subset \Z_4^2
\end{align*}
where $L_1$ and $L_2$ form a partition of $L$. For $0 \le i \le m$, define a subset $E_{m,i}$ of $\Z_4^{2m}$ as
$$
E_{m,i}=\sum_{\substack{|\{1 \le j \le m \mid N_j=L_1\}|=i \\ |\{1 \le j \le m \mid N_j=L_2\}|=m-i}} \prod_{j=1}^m N_j.
$$

The infinite family in the next theorem has been discovered in \cite[Theorem 6.2]{LP} using a recursive approach. Below, we give a direct construction employing the lifting construction framework \eqref{eqn-lifting}.

\begin{theorem}\label{thm-dircon2}
Let $S=T=\prod_{j=1}^m L$. Define
\begin{equation}\label{eqn-S0dircon2}
S_0=\begin{cases}
  \sum_{i=0}^{\frac{m-1}{2}} E_{m,2i+1} & \mbox{if $m$ is odd,} \\[5pt]
  \sum_{i=0}^{\frac{m}{2}} E_{m,2i} & \mbox{if $m$ is even,}
  \end{cases}
\end{equation}
and
\begin{equation}\label{eqn-S1dircon2}
S_1=\begin{cases}
  \sum_{i=0}^{\frac{m-1}{2}} E_{m,2i} & \mbox{if $m$ is odd,} \\[5pt]
  \sum_{i=0}^{\frac{m}{2}-1} E_{m,2i+1} & \mbox{if $m$ is even,}
  \end{cases}
\end{equation}
which form a partition of $S$. Let
\begin{equation}
\begin{aligned}\label{eqn-Tpdircon2}
S^{\pr}&=\{(0,x) \mid x \in S_0\} \cup \{(1,x) \mid x \in S_1\}, \\
T^{\pr}&=\{(0,x) \mid x \in T\} \cup \{(1,x) \mid x \in T^{(-1)}\}.
\end{aligned}
\end{equation}
Then $S^{\pr}$ and $T^{\pr}$ form a primitive formally dual pair in $\Z_2 \times \Z_4^{2m}$. Moreover, we have
\begin{align*}
&[[T^{\pr}T^{\pr(-1)}]_g \mid g \in \Z_2 \times \Z_4^{2m}] \\
=&[0\lan 2^{4m+1}-10^{m}-13^{m}\ran, 2^l\lan 12^{m-\frac{l-1}{2}}\binom{m}{\frac{l-1}{2}}+2^{2m-l+1}3^{l-1}\binom{m}{l-1}\ran \mid 1 \le l \le m+1, \mbox{$l$ odd}] \\
&\cup [2^l\lan 2^{2m-l+1}3^{l-1}\binom{m}{l-1}\ran \mid 2 \le l \le m+1, \mbox{$l$ even}] \\
&\cup [ 2^l\lan 12^{m-\frac{l-1}{2}}\binom{m}{\frac{l-1}{2}}\ran \mid m+2 \le l \le 2m+1, \mbox{$l$ odd}].
\end{align*}
\end{theorem}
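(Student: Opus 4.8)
The plan is to recognize $S^\pr,T^\pr$ as the output of the lifting construction \eqref{eqn-lifting} applied to the starter $S=T=\prod_{j=1}^m L$ in $G=\Z_4^{2m}$, verify the hypotheses of Proposition~\ref{prop-lifting}, and then read the difference spectrum of $T^\pr$ off the group‑ring expansion of $T^\pr T^{\pr(-1)}$. First I would record that a direct evaluation of all $16$ characters of $\Z_4^2$ gives $\chi_w(L)\in\{4,2,0,2\sqm,-2\sqm\}$ for every $w$, whence $|\chi_w(L)|^2=4\nu_L(w)$, so $L$ is formally dual to itself; since $L$ lies in no coset of a proper subgroup and is no union of cosets of a nontrivial subgroup, it is primitive. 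By $m$ applications of Proposition~\ref{prop-prod}, $(S,T)$ is then a primitive formally dual pair in $\Z_4^{2m}$ with $|S|=|T|=4^m=\sqrt{|G|}$. Since $L=L_1\sqcup L_2$ and the coordinate blocks are independent, expanding $\prod_j L=\prod_j(L_1+L_2)$ in $\Z[\Z_4^{2m}]$ shows $S=\sum_{i=0}^m E_{m,i}$ where the $E_{m,i}$ are $\{0,1\}$‑valued with pairwise disjoint supports; hence $S_0$ (the sum of the $E_{m,i}$ with $i\equiv m\bmod 2$) and $S_1$ (the rest) do form a partition of $S$.

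The crucial algebraic fact is that $\chi_w(L)^2$ is real, with
\[
\chi_w(L)^2=4\,[(L_1-L_2)(L_1-L_2)^{(-1)}]_w\quad\text{for all }w\in\Z_4^2,
\]
which I would verify by comparing the two length‑$16$ tables. Expanding $\prod_j(L_1-L_2)$ likewise gives $\prod_{j=1}^m(L_1-L_2)=\sum_i(-1)^{m-i}E_{m,i}=S_0-S_1$. Using $\chi_z(T^{(-1)})=\overline{\chi_z(T)}$ one has $|\chi_z(T+T^{(-1)})|^2=2|\chi_z(T)|^2+2\,\Re(\chi_z(T)^2)$; the formal duality of $(T,T)$ gives $|\chi_z(T)|^2=4^m[TT^{(-1)}]_z$; and, since $\chi_z(T)^2=\prod_j\chi_{z^{(j)}}(L)^2$, the key fact yields $\chi_z(T)^2=4^m\prod_j[(L_1-L_2)(L_1-L_2)^{(-1)}]_{z^{(j)}}=4^m[(S_0-S_1)(S_0-S_1)^{(-1)}]_z\in\R$. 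On the other side, the polarization identity gives $\nu_{S_0}(z)+\nu_{S_1}(z)=\tfrac12\bigl([TT^{(-1)}]_z+[(S_0-S_1)(S_0-S_1)^{(-1)}]_z\bigr)$. Substituting everything into the condition of Proposition~\ref{prop-lifting} (recall $\tfrac{4|T|^2}{|S|}=4^{m+1}$) turns it into an identity, so $S^\pr,T^\pr$ form a primitive formally dual pair in $\Z_2\times\Z_4^{2m}$.

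For the difference spectrum, write $T^\pr=\sum_{x\in T}(0,x)+\sum_{x\in T^{(-1)}}(1,x)$; multiplying by $T^{\pr(-1)}$ in $\Z[\Z_2\times\Z_4^{2m}]$ and sorting by the $\Z_2$‑coordinate gives
\[
[T^\pr T^{\pr(-1)}]_{(0,g)}=2[TT^{(-1)}]_g,\qquad [T^\pr T^{\pr(-1)}]_{(1,g)}=[T^2]_g+[T^2]_{-g}.
\]
A direct computation of $L^2$ in $\Z[\Z_4^2]$ shows $L^2=(L^2)^{(-1)}$, hence $T^2=\prod_j L^2$ is symmetric as well and the $(1,g)$‑entry equals $2[T^2]_g$. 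Now $[TT^{(-1)}]_g=\prod_j\nu_L(g^{(j)})$ with $\nu_L(w)$ equal to $4,1,0$ according as $w=0$, $w\notin 2\Z_4^2$, or $w\in 2\Z_4^2\sm\zs$; and $[T^2]_g=\prod_j[L^2]_{g^{(j)}}$ with $[L^2]_w$ equal to $2,1,0$ according as $w$ lies in an explicit $6$‑set, in $2\Z_4^2$, or in the complementary $6$‑set. Grouping by the number $k$ of blocks with $g^{(j)}=0$ and the number $s$ of blocks with $[L^2]_{g^{(j)}}=2$: the $0$‑layer produces the value $2^{2k+1}$ with multiplicity $\binom{m}{k}12^{m-k}$, the $1$‑layer produces $2^{s+1}$ with multiplicity $\binom{m}{s}6^s4^{m-s}=\binom{m}{s}2^{2m-s}3^s$, and the remaining $16^m-13^m$ and $16^m-10^m$ group elements in the two layers carry value $0$. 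Re‑indexing via $l=2k+1$ and $l=s+1$, using $\sum_k\binom mk 12^{m-k}=13^m$, $\sum_s\binom ms 6^s4^{m-s}=10^m$ and $2\cdot 16^m=2^{4m+1}$, and merging the two layers gives precisely the stated multiset.

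The conceptual heart is the pair of observations above: that $\chi_w(L)^2$ is real and equals $4[(L_1-L_2)(L_1-L_2)^{(-1)}]_w$ (which collapses the lifting condition to a blockwise product identity) and that $L^2$ is symmetric (which makes the $(1,g)$‑layer of the difference spectrum multiplicative and lets me avoid summing $\prod_j[L^2]_{g^{(j)}}+\prod_j[L^2]_{-g^{(j)}}$ term by term). Granting those, the remaining difficulty is purely combinatorial bookkeeping in the last step: keeping track of the parity of $l$ and of the two ranges $1\le l\le m+1$ versus $m+2\le l\le 2m+1$ when the $0$‑layer and $1$‑layer contributions are merged — the $1$‑layer contributes only for $l\le m+1$, since $\binom{m}{l-1}=0$ otherwise — and confirming that the zero‑entry count $2^{4m+1}-10^m-13^m$ absorbs exactly the complements of both supports.
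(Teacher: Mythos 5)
Your proposal is correct, and the part that verifies the lifting condition takes a genuinely different route from the paper. The paper proves \eqref{eqn-eqn8} by computing both sides case by case over the partition of $\Z_4^{2m}$ into the pieces $F_{m,u,v,w}$ and $M_m$: the right-hand side via the combinatorial analysis of $E_{m,i}E_{m,j}^{(-1)}$ in Lemma~\ref{lem-FE} and Proposition~\ref{prop-S0S0invS1S1inv2} (where a binomial sum produces $4^{m-u-v+w}$), and the left-hand side via block character values in Proposition~\ref{prop-TTinv2}. You instead collapse the condition to an identity with no case analysis, using the polarization identity $2\bigl(S_0S_0^{(-1)}+S_1S_1^{(-1)}\bigr)=TT^{(-1)}+(S_0-S_1)(S_0-S_1)^{(-1)}$, the factorization $S_0-S_1=\prod_{j=1}^m(L_1-L_2)$, and the single-block fact $\chi_w(L)^2=4\,[(L_1-L_2)(L_1-L_2)^{(-1)}]_w$ (which is exactly the content of the paper's table \eqref{eqn-eqn10} read as a coefficient statement); combined with $|\chi_z(T)|^2=4^m[TT^{(-1)}]_z$ and $|\chi_z(T+T^{(-1)})|^2=2|\chi_z(T)|^2+2\Re\bigl(\chi_z(T)^2\bigr)$, both sides become $2\cdot 4^m\bigl([TT^{(-1)}]_z+[(S_0-S_1)(S_0-S_1)^{(-1)}]_z\bigr)$. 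This is shorter and reduces the whole verification to one $16$-entry check on $\Z_4^2$; what it does not give you is the explicit piecewise data on each $F_{m,u,v,w}$ (the values in Propositions~\ref{prop-S0S0invS1S1inv2} and~\ref{prop-TTinv2}), which is part of the extra structural information the paper is advertising beyond the bare statement of Theorem~\ref{thm-dircon2}. Your computation of the difference spectrum is essentially the paper's (Proposition~\ref{prop-TpTpinv2} via the two layers $2(0,TT^{(-1)})+(1,TT+T^{(-1)}T^{(-1)})$ and the per-block values of Lemma~\ref{lem-TTdiff2}, which you re-derive directly, including the symmetry $LL=L^{(-1)}L^{(-1)}$), and your bookkeeping of the ranges and parities of $l$ matches the stated multiset.
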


\begin{remark}
In \cite[Theorem 6.2]{LP}, we can only derive the frequency of $0$ in the difference spectrum of $T^{\pr}$. The direct construction demonstrated below provides more insights into the structure of $T^{\pr}$, which enable us to compute the difference spectrum of $T^{\pr}$. In addition, we also know the character spectrum of $S^{\pr}$ by \eqref{eqn-def}.
\end{remark}


Note that $L$ and $L$ form a primitive formally dual pair in $\Z_4^2$ \cite[Theorem 3.7(1)]{LPS}. By Proposition~\ref{prop-prod}, $S=\prod_{j=1}^m L$ and $T=\prod_{j=1}^m L$ form a primitive formally dual pair in $\Z_4^{2m}$. Note that the construction in Theorem~\ref{thm-dircon2} fits into the lifting construction framework \eqref{eqn-lifting}. By Proposition~\ref{prop-lifting}, in order to show that $S^{\pr}$ and $T^{\pr}$ form a primitive formally dual pair, it suffices to show that
\begin{equation}\label{eqn-eqn8}
|\chi_z(T+T^{(-1)})|^2=\frac{4|T|^2}{|S|}(\nu_{S_0}(z)+\nu_{S_1}(z)), \quad \mbox{for each $z \in \Z_4^{2m}$}.
\end{equation}

Now we proceed to compute the left and right hand sides of \eqref{eqn-eqn8}. We first consider the right hand side. To understand $S_0S_0^{(-1)}$ and $S_1S_1^{(-1)}$, we need to compute $E_{m,i}E_{m,j}^{(-1)}$. For this purpose, we introduce more notation below. Define four subsets of $\Z_4^2$ as
\begin{align*}
N_{11}&=\{L_1L_1^{(-1)}\} \sm \{(0,0)\}, \\
N_{12}&=\{L_1L_2^{(-1)}\}, \\
N_{21}&=\{L_2L_1^{(-1)}\}, \\
N_{22}&=\{L_2L_2^{(-1)}\}=\{(0,0)\}.
\end{align*}
Define a subset of $\Z_4^{2m}$ as
\begin{equation}\label{eqn-N}
N=\underbrace{N_{11} \times \cdots \times N_{11}}_{w} \times \underbrace{N_{12} \times \cdots \times N_{12}}_{u-w} \times \underbrace{N_{21} \times \cdots \times N_{21}}_{v-w} \times \underbrace{N_{22} \times \cdots \times N_{22}}_{m-u-v+w}.
\end{equation}
Note that $\Z_4^2$ can be partitioned as
$$
\Z_4^2=N_{11} \cup N_{12} \cup N_{21} \cup N_{22} \cup Z,
$$
where $Z=\{(0,2),(2,0),(2,2)\}$. For $0 \le u,v,w \le 2m$ and $\max\{0,u+v-m\} \le w \le \min\{u,v\}$, define
$$
F_{m,u,v,w}=\sum_{\substack{ |\{ 1 \le j \le m \mid N_j=N_{11}\}|=w \\
|\{ 1 \le j \le m \mid N_j=N_{12} \}|=u-w \\
|\{ 1 \le j \le m \mid N_j=N_{21}\}\}|=v-w  \\
|\{ 1 \le j \le m \mid N_j=N_{22} \}|=m-u-v+w }} \prod_{j=1}^m N_j.
$$
Hereafter, when we write $F_{m,u,v,w}$, we always assume that $0 \le u,v,w \le 2m$ and $\max\{0,u+v-m\} \le w \le \min\{u,v\}$ hold. Define
$$
M_m=\{(z_1,z_2,\ldots,z_m) \in \Z_4^{2m} \mid \mbox{each $z_i \in \Z_4^2$ and there exists $z_j \in Z$} \}.
$$
Therefore, $\Z_4^{2m}$ can be partitioned as
$$
\Z_4^{2m}=(\bigcup_{\substack{ 0 \le u,v,w \le m \\ \max\{0,u+v-m\} \le w \le \min\{u,v\}}} F_{m,u,v,w})\bigcup M_m.
$$

Let $\sig \in \Sym(m)$. For $z=(z_1,z_2,\ldots,z_m) \in \Z_4^{2m}$, where $z_i \in \Z_4^2$, define
$$
\sig(z)=(z_{\sig(1)},z_{\sig(2)},\ldots,z_{\sig(m)}).
$$
The action of $\sig$ on elements of $\Z_4^{2m}$ can be naturally extended to a subset of $\Z_4^{2m}$. For instance, we have
\begin{align*}
\sig(E_{m,i})&=\sum_{\substack{|\{1 \le j \le m \mid N_j=L_1\}|=i \\ |\{1 \le j \le m \mid N_j=L_2\}|=m-i}} \prod_{j=1}^m N_{\sig(j)}=E_{m,i}, \\
\sig(F_{m,u,v,w})&=\sum_{\substack{ |\{ 1 \le j \le m \mid N_j=N_{11}\}|=w \\
|\{ 1 \le j \le m \mid N_j=N_{12} \}|=u-w \\
|\{ 1 \le j \le m \mid N_j=N_{21}\}\}|=v-w  \\
|\{ 1 \le j \le m \mid N_j=N_{22} \}|=m-u-v+w }} \prod_{j=1}^m N_{\sig(j)}=F_{m,u,v,w}.
\end{align*}
By the definitions of $N$ and $F_{m,u,v,w}$, we have $F_{m,u,v,w}=\bigcup_{\sig \in \Sym(m)} \{\sig(N)\}$.

The following lemma concerns $F_{m,u,v,w}$, as well as the relation between $F_{m,u,v,w}$ and $E_{m,i}E_{m,j}^{(-1)}$.

\begin{lemma}\label{lem-FE}
\begin{itemize}
\item[(1)] $F_{m,u,v,w} \subset [E_{m,i}E_{m,j}^{(-1)}]$ if and only if $i=u+h$ and $j=v+h$ for some $0 \le h \le m-u-v+w$.
\item[(2)] For each $x \in F_{m,u,v,w}$ and $0 \le h \le m-u-v+w$, we have $[E_{m,u+h}E_{m,v+h}^{(-1)}]_x=3^h\binom{m-u-v+w}{h}$.
\end{itemize}
\end{lemma}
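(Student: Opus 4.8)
The plan is to expand $E_{m,i}E_{m,j}^{(-1)}$ coordinate by coordinate over pairs of ``type vectors'' and read off the coefficient of an arbitrary $x \in F_{m,u,v,w}$. First I would record the four building-block products in $\Z[\Z_4^2]$: a direct check gives $L_1L_1^{(-1)} = 3\cdot(0,0) + \sum_{g \in N_{11}}g$ with $|N_{11}|=6$ (the coefficient $3$ at $(0,0)$ being simply $\nu_{L_1}(0)=|L_1|$, and every nonzero difference of $L_1$ occurring exactly once), $L_1L_2^{(-1)} = \sum_{g \in N_{12}}g$, $L_2L_1^{(-1)} = \sum_{g \in N_{21}}g$, and $L_2L_2^{(-1)} = (0,0)$. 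The crucial structural fact, immediate from the partition $\Z_4^2 = N_{11}\cup N_{12}\cup N_{21}\cup N_{22}\cup Z$, is that these four supports are pairwise disjoint except that both $\{L_1L_1^{(-1)}\}$ and $\{L_2L_2^{(-1)}\}$ contain $(0,0)$.

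Next, writing $E_{m,i} = \sum_{\tau}\prod_{k=1}^m \tau_k$, where $\tau$ runs over the sequences in $\{L_1,L_2\}^m$ with exactly $i$ entries equal to $L_1$, I would expand
\[
[E_{m,i}E_{m,j}^{(-1)}]_x = \sum_{\tau,\rho} \prod_{k=1}^m [\tau_k\rho_k^{(-1)}]_{x_k},
\]
the sum being over such $\tau$ (with $i$ copies of $L_1$) and $\rho$ (with $j$ copies of $L_1$). Now fix $x \in F_{m,u,v,w}$ and let $A,B,C,D$ be the sets of coordinates on which $x_k$ lies in $N_{11}$, $N_{12}$, $N_{21}$, $\{(0,0)\}$ respectively, so $|A|=w$, $|B|=u-w$, $|C|=v-w$, $|D|=m-u-v+w$. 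By the disjointness above, a given term of the sum is nonzero only if $(\tau_k,\rho_k)=(L_1,L_1)$ for $k\in A$, $(\tau_k,\rho_k)=(L_1,L_2)$ for $k\in B$, $(\tau_k,\rho_k)=(L_2,L_1)$ for $k\in C$, and $(\tau_k,\rho_k)\in\{(L_1,L_1),(L_2,L_2)\}$ for $k\in D$; and in that case the term equals $3^h$, where $h$ is the number of $k\in D$ with $(\tau_k,\rho_k)=(L_1,L_1)$. Counting the occurrences of $L_1$ in $\tau$ and in $\rho$ then forces $i = w+(u-w)+h = u+h$ and $j = w+(v-w)+h = v+h$, with $0\le h\le|D|=m-u-v+w$. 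For such $i,j$ the pair $(\tau,\rho)$ is pinned down on $A\cup B\cup C$, there are $\binom{m-u-v+w}{h}$ ways to choose the $h$-subset of $D$ carrying $(L_1,L_1)$, and each contributes $3^h$; hence $[E_{m,u+h}E_{m,v+h}^{(-1)}]_x = 3^h\binom{m-u-v+w}{h}$, which is part~(2). (Note that the ranges of $h$ also keep $u+h$ and $v+h$ between $0$ and $m$, so $E_{m,u+h}$ and $E_{m,v+h}$ are genuine.)

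Part~(1) then follows at once: the displayed coefficient is strictly positive precisely when there is an integer $h$ with $0\le h\le m-u-v+w$ and $i=u+h$, $j=v+h$; and since the computation used only the ``type'' of $x$ — which, under the standing parameter restrictions, is the common type of every element of the nonempty set $F_{m,u,v,w}$ — this holds for one $x\in F_{m,u,v,w}$ if and only if it holds for all of them. By the convention for $B\subset[A]$, this is exactly the assertion $F_{m,u,v,w}\subset[E_{m,i}E_{m,j}^{(-1)}]$.

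I expect the only place that needs genuine care to be the bookkeeping of which pairs $(\tau_k,\rho_k)$ are ``compatible'' with each coordinate type of $x$ — in particular, isolating the coefficient $3$ of $(0,0)$ in $L_1L_1^{(-1)}$ as the sole source of the powers of $3$, and checking that an element of $N_{11}$ genuinely forces $(\tau_k,\rho_k)=(L_1,L_1)$ with multiplicity exactly one. Everything else is a routine multinomial count, and the already-noted invariance of $E_{m,i}$ and $F_{m,u,v,w}$ under $\Sym(m)$ is what makes ``the type of $x$'' the well-defined notion on which the whole argument rests.
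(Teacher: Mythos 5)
Your proof is correct and follows essentially the same route as the paper's: the same coordinate-wise analysis that forces $(\tau_k,\rho_k)$ to be $(L_1,L_1)$, $(L_1,L_2)$, $(L_2,L_1)$ on coordinates of type $N_{11}$, $N_{12}$, $N_{21}$, and leaves only the weighted binary choice $(L_1,L_1)$ versus $(L_2,L_2)$ (with multiplicities $3$ and $1$) on the $(0,0)$-coordinates, yielding $3^h\binom{m-u-v+w}{h}$. The only difference is organizational: you read off part (1) as the positivity of this coefficient formula, whereas the paper proves (1) first via explicit witness products $P_1,P_2$ together with $\Sym(m)$-invariance and then (2) by reducing to $x\in N$; the underlying counting is identical.
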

\begin{proof}
(1) Suppose $i=u+h$ and $j=v+h$ for some $0 \le h \le m-u-v+w$. Consider
$$
P_1 = \underbrace{L_1 \times \cdots \times L_1}_{w} \times \underbrace{L_1 \times \cdots \times L_1}_{u-w} \times \underbrace{L_2 \times \cdots \times L_2}_{v-w} \times \underbrace{L_1 \times \cdots \times L_1}_{h} \times \underbrace{L_2 \times \cdots \times L_2}_{m-u-v+w-h}
$$
and
$$
P_2 = \underbrace{L_1 \times \cdots \times L_1}_{w} \times \underbrace{L_2 \times \cdots \times L_2}_{u-w} \times \underbrace{L_1 \times \cdots \times L_1}_{v-w} \times \underbrace{L_1 \times \cdots \times L_1}_{h} \times \underbrace{L_2 \times \cdots \times L_2}_{m-u-v+w-h}.
$$
Note that $P_1 \subset [E_{m,u+h}]$, $P_2 \subset [E_{m,v+h}]$ and $N \subset [P_1P_2^{(-1)}] \subset [E_{m,u+h}E_{m,v+h}^{(-1)}]$, where $N$ is defined in \eqref{eqn-N}. For any $\sig \in \Sym(m)$, we have $\sig(N) \subset [\sig(P_1)\sig(P_2)^{(-1)}] \subset [\sig(E_{m,u+h})\sig(E_{m,v+h}^{(-1)})]= [E_{m,u+h}E_{m,v+h}^{(-1)}]$. Since $F_{m,u,v,w}=\bigcup_{\sig \in \Sym(m)} \{\sig(N)\}$, we have
$$
F_{m,u,v,w} \subset [E_{m,u+h}E_{m,v+h}^{(-1)}].
$$
Conversely, suppose $F_{m,u,v,w} \subset [E_{m,i}E_{m,j}^{(-1)}]$, then $N \subset [E_{m,i}E_{m,j}^{(-1)}]$. By the definition of $E_{m,i}$ and $E_{m,j}$, there exist $P_1^{\pr} \subset E_{m,i}$ and $P_2^{\pr} \subset E_{m,j}$, such that $P_1^{\pr}$ and $P_2^{\pr}$ are formed by products of $L_1$ and $L_2$, and $N \subset [P_1^{\pr}P_2^{\pr(-1)}]$. Since $N_{22}=\{(0,0)\}$ belongs to $\{L_1L_1^{(-1)}\}$ and $\{L_2L_2^{(-1)}\}$, this forces
\begin{align*}
P_1^{\pr}&=\underbrace{L_1 \times \cdots \times L_1}_{w} \times \underbrace{L_1 \times \cdots \times L_1}_{u-w} \times \underbrace{L_2 \times \cdots \times L_2}_{v-w} \times \underbrace{\star \times \cdots \times \star}_{m-u-v+w}, \\
P_2^{\pr}&=\underbrace{L_1 \times \cdots \times L_1}_{w} \times \underbrace{L_2 \times \cdots \times L_2}_{u-w} \times \underbrace{L_1 \times \cdots \times L_1}_{v-w} \times \underbrace{\star \times \cdots \times \star}_{m-u-v+w},
\end{align*}
where for each of the last $m-u-v+w$ components, the two subsets in $P_1^{\pr}$ and $P_2^{\pr}$ are either both $L_1$ or both $L_2$. Suppose for some $0 \le h \le m-u-v+w$, exactly $h$ of the last $m-u-v+w$ components in $P_1^{\pr}$ and $P_2^{\pr}$ contain both $L_1$. Then,
we have $i=u+h$ and $j=v+h$.

(2) By the definition of $N$, $E_{m,u+h}$, $E_{m,v+h}$ and $F_{m,u,v,w}=\bigcup_{\sig \in \Sym(m)} \{\sig(N)\}$, we can see that
\begin{itemize}
\item[a)] For all $x,y \in N$, we have $[E_{m,u+h}E_{m,v+h}^{(-1)}]_x=[E_{m,u+h}E_{m,v+h}^{(-1)}]_y$.
\item[b)] For each $y \in F_{m,u,v,w}$, there exists $x \in N$ and $\sig \in \Sym(m)$, such that $y=\sig(x)$.
\end{itemize}
Combining a) and b), we conclude that $[E_{m,u+h}E_{m,v+h}^{(-1)}]_x=[E_{m,u+h}E_{m,v+h}^{(-1)}]_y$ for all $x,y \in F_{m,u,v,w}$. Without loss of generality, we assume that $x \in N$. Then there exists $Q_1 \subset E_{m,u+h}$ and $Q_2 \subset E_{m,v+h}$ where $Q_1$ and $Q_2$ are products of $L_1$ and $L_2$, such that $x \in [Q_1Q_2^{(-1)}]$. This forces
\begin{align*}
Q_1&=\underbrace{L_1 \times \cdots \times L_1}_{w} \times \underbrace{L_1 \times \cdots \times L_1}_{u-w} \times \underbrace{L_2 \times \cdots \times L_2}_{v-w} \times \underbrace{\star \times \cdots \times \star}_{m-u-v+w}, \\
Q_2&=\underbrace{L_1 \times \cdots \times L_1}_{w} \times \underbrace{L_2 \times \cdots \times L_2}_{u-w} \times \underbrace{L_1 \times \cdots \times L_1}_{v-w} \times \underbrace{\star \times \cdots \times \star}_{m-u-v+w},
\end{align*}
where for each of the last $m-u-v+w$ components, the two subsets in $Q_1$ and $Q_2$ are either both $L_1$ or both $L_2$, and there are exactly $h$ components containing both $L_1$, where $0 \le h \le m-u-v+w$. Hence, there are $\binom{m-u-v+w}{h}$ ways to choose $h$ components containing both $L_1$.  Notice that $[L_1L_1^{(-1)}]_{(0,0)}=3$, in each of these $h$ components containing both $L_1$, there are three distinct ways to express $(0,0)$ as a difference of elements from $L_1$. Similarly, since $[L_2L_2^{(-1)}]_{(0,0)}=1$, in the remaining $m-u-v+w-h$ components containing both $L_2$, there is a unique way to express $(0,0)$ as a difference of elements from $L_2$. Thus, we have $[Q_1Q_2^{(-1)}]_x=3^h$. In total, we get $[E_{m,u+h}E_{m,v+h}^{(-1)}]_x=3^h\binom{m-u-v+w}{h}$.
\end{proof}

Employing Lemma~\ref{lem-FE}, we can determine the multiset $[S_0S_0^{(-1)}+S_1S_1^{(-1)}]$.

\begin{proposition}\label{prop-S0S0invS1S1inv2}
Let $S_0$ and $S_1$ be the two subsets defined in \eqref{eqn-S0dircon2} and \eqref{eqn-S1dircon2}, respectively. For $z \in \Z_4^{2m}$, we have
$$
[S_0S_0^{(-1)}+S_1S_1^{(-1)}]_z=\begin{cases}
  0 & \mbox{if $z \in M_{m}$,} \\
  0 & \mbox{if $z \in F_{m,u,v,w}$ and $u+v$ odd,} \\
  4^{m-u-v+w} & \mbox{if $z \in F_{m,u,v,w}$ and $u+v$ even.}
\end{cases}
$$
\end{proposition}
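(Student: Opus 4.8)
The plan is to reduce the entire computation to Lemma~\ref{lem-FE} by a single parity observation. Regardless of the parity of $m$, inspection of \eqref{eqn-S0dircon2} and \eqref{eqn-S1dircon2} shows that, as elements of $\Z[\Z_4^{2m}]$, the sets $S_0$ and $S_1$ are $\sum_{i\text{ even}}E_{m,i}$ and $\sum_{i\text{ odd}}E_{m,i}$ in one order or the other. Consequently, by distributivity of the group ring product,
\begin{equation*}
S_0S_0^{(-1)}+S_1S_1^{(-1)}=\sum_{\substack{0\le i,j\le m\\ 2\mid i-j}}E_{m,i}E_{m,j}^{(-1)},
\end{equation*}
and it remains to read off the coefficient of an arbitrary $z\in\Z_4^{2m}$ from this sum, distinguishing whether $z\in M_m$ or $z\in F_{m,u,v,w}$ according to the stated partition $\Z_4^{2m}=\big(\bigcup_{u,v,w}F_{m,u,v,w}\big)\cup M_m$.

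First I would dispatch the case $z\in M_m$. Writing an element of $E_{m,i}E_{m,j}^{(-1)}$ as $a-b$ with $a\in E_{m,i}$ and $b\in E_{m,j}$, each of its $m$ blocks is a difference of an element of $L_1$ or $L_2$ by an element of $L_1$ or $L_2$; running through the four possibilities, every such block lies in $N_{11}\cup N_{12}\cup N_{21}\cup N_{22}$, and in particular avoids $Z=\{(0,2),(2,0),(2,2)\}$. Hence the support of each $E_{m,i}E_{m,j}^{(-1)}$ is disjoint from $M_m$, so $[S_0S_0^{(-1)}+S_1S_1^{(-1)}]_z=0$ for every $z\in M_m$, which also covers the first case of the claim.

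For $z\in F_{m,u,v,w}$ I would invoke Lemma~\ref{lem-FE} directly. By part~(1), among all pairs $(i,j)$ with $0\le i,j\le m$, the term $E_{m,i}E_{m,j}^{(-1)}$ contributes to the coefficient of $z$ exactly when $(i,j)=(u+h,v+h)$ for some $0\le h\le m-u-v+w$, and by part~(2) each such contribution equals $3^h\binom{m-u-v+w}{h}$. In the sum above we additionally impose $2\mid i-j$; since $i-j=u-v$ for all of these pairs, the restriction keeps every admissible $h$ when $u+v$ is even and discards all of them when $u+v$ is odd. Therefore the coefficient is $0$ when $u+v$ is odd, while when $u+v$ is even it equals
\begin{equation*}
\sum_{h=0}^{m-u-v+w}3^h\binom{m-u-v+w}{h}=(1+3)^{m-u-v+w}=4^{m-u-v+w}
\end{equation*}
by the binomial theorem, which is precisely the asserted value.

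There is no serious obstacle; the only point needing a little care is the bookkeeping in the last step, namely recognizing that for a fixed $z$ the pairs $(i,j)$ furnished by Lemma~\ref{lem-FE}(1) are pairwise distinct, are parametrized bijectively by $h\in\{0,\dots,m-u-v+w\}$, and all satisfy $i-j=u-v$, so that the parity condition acts as an all-or-nothing filter and the surviving sum collapses to a single binomial identity. Everything else is an immediate consequence of Lemma~\ref{lem-FE} together with the stated partition of $\Z_4^{2m}$.
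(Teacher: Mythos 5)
Your proof is correct and takes essentially the same route as the paper's: reduce everything to Lemma~\ref{lem-FE} together with the binomial identity $\sum_{h}3^h\binom{m-u-v+w}{h}=4^{m-u-v+w}$, after noting that elements of $E_{m,i}E_{m,j}^{(-1)}$ never meet $M_m$. Your observation that $S_0$ and $S_1$ are always the odd- and even-indexed $E_{m,i}$'s in some order, so that $S_0S_0^{(-1)}+S_1S_1^{(-1)}=\sum_{i\equiv j \pmod 2}E_{m,i}E_{m,j}^{(-1)}$, is a pleasant streamlining that handles both parities of $m$ simultaneously and avoids the paper's sub-case analysis on the parities of $u$, $v$ and $h$, but it is a presentational simplification rather than a different argument.
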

\begin{proof}
We only prove the case of $m$ being odd. The proof of $m$ even case is completely analogous. Recalling that $S_0=\sum_{i=0}^{\frac{m-1}{2}} E_{m,2i+1}$ and $S_1=\sum_{i=0}^{\frac{m-1}{2}} E_{m,2i}$, we have
$$
S_0S_0^{(-1)}=\sum_{0 \le i,j \le \frac{m-1}{2}} E_{m,2i+1}E_{m,2j+1}^{(-1)}, \quad S_1S_1^{(-1)}=\sum_{0 \le i,j \le \frac{m-1}{2}} E_{m,2i}E_{m,2j}^{(-1)}.
$$
By definition, $[S_0S_0^{(-1)}+S_1S_1^{(-1)}]_z=0$ for each $z \in M_{m}$. Now let $z \in F_{m,u,v,w}$. By Lemma~\ref{lem-FE}(1), we know that
\begin{align*}
z \in [S_0S_0^{(-1)}] \Leftrightarrow &\quad \mbox{there exists some $0 \le i,j \le \frac{m-1}{2}$ and $0 \le h \le m-u-v+w$,} \\
                                        &\quad \mbox{such that $2i+1=u+h$, $2j+1=v+h$,}
\end{align*}
and
\begin{align*}
z \in [S_1S_1^{(-1)}] \Leftrightarrow &\quad \mbox{there exists some $0 \le i,j \le \frac{m-1}{2}$ and $0 \le h \le m-u-v+w$,} \\
                                        &\quad \mbox{such that $2i=u+h$, $2j=v+h$.}
\end{align*}
Therefore, if $u+v$ is odd, then $[S_0S_0^{(-1)}+S_1S_1^{(-1)}]_z=0$. If $u+v$ is even, then we have
$$
[S_0S_0^{(-1)}]_z=\begin{cases}
  \sum_{\substack{0 \le h \le m-u-v+w \\ h \equiv 0 \bmod2}} [E_{m,u+h}E_{m,v+h}^{(-1)}]_z & \mbox{if $u$ and $v$ both odd,} \\
  \sum_{\substack{0 \le h \le m-u-v+w \\ h \equiv 1 \bmod2}} [E_{m,u+h}E_{m,v+h}^{(-1)}]_z & \mbox{if $u$ and $v$ both even,} \\
\end{cases}
$$
and
$$
[S_1S_1^{(-1)}]_z=\begin{cases}
  \sum_{\substack{0 \le h \le m-u-v+w \\ h \equiv 1 \bmod2}} [E_{m,u+h}E_{m,v+h}^{(-1)}]_z & \mbox{if $u$ and $v$ both odd,} \\
  \sum_{\substack{0 \le h \le m-u-v+w \\  h \equiv 0 \bmod2}} [E_{m,u+h}E_{m,v+h}^{(-1)}]_z & \mbox{if $u$ and $v$ both even.} \\
\end{cases}
$$
Together with Lemma~\ref{lem-FE}(2), we have
\begin{align*}
[S_0S_0^{(-1)}+S_1S_1^{(-1)}]_z&=\sum_{0 \le h \le m-u-v+w} [E_{m,u+h}E_{m,v+h}^{(-1)}]_z \\
                               &=\sum_{0 \le h \le m-u-v+w} 3^h\binom{m-u-v+w}{h}=4^{m-u-v+w},
\end{align*}
which completes the proof.
\end{proof}

Next, we compute the left hand side of \eqref{eqn-eqn8} in the following proposition.

\begin{proposition}\label{prop-TTinv2}
Let $T=\prod_{j=1}^m L$. For $z \in \Z_4^{2m}$, we have
$$
|\chi_z(T+T^{(-1)})|^2=\begin{cases}
  0 & \mbox{if $z \in M_{m}$,} \\
  0 & \mbox{if $z \in F_{m,u,v,w}$ and \mbox{$u+v$ odd},} \\
  4^{2m-u-v+w+1} & \mbox{if $z \in F_{m,u,v,w}$ and \mbox{$u+v$ even}.}
\end{cases}
$$
\end{proposition}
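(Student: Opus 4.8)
\emph{Proof proposal.}
The plan is to exploit the Cartesian product structure $T=\prod_{j=1}^m L$. For $z=(z_1,\ldots,z_m)\in\Z_4^{2m}$ with each $z_j\in\Z_4^2$, the definition $\chi_z(b)=(\sqrt{-1})^{z\cdot b}$ splits over the $m$ blocks of two coordinates, so $\chi_z(T)=\prod_{j=1}^m\chi_{z_j}(L)$, where on the right $\chi_{z_j}$ is the canonical character of $\Z_4^2$. Since $T^{(-1)}=\prod_{j=1}^m L^{(-1)}$ and $\chi_{z_j}(L^{(-1)})=\overline{\chi_{z_j}(L)}$, we get $\chi_z(T^{(-1)})=\overline{\chi_z(T)}$, hence $\chi_z(T+T^{(-1)})=2\,\mathrm{Re}\,\chi_z(T)$ and $|\chi_z(T+T^{(-1)})|^2=4\left(\mathrm{Re}\,\chi_z(T)\right)^2$. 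Everything therefore reduces to understanding the single complex number $\prod_{j=1}^m\chi_{z_j}(L)$: whether it is real (so that doubling its real part doubles its modulus) or purely imaginary (so that the real part vanishes), and what its modulus is.

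Next I would compute $\chi_z(L)$ for all sixteen $z\in\Z_4^2$. Using $\chi_z(L)=1+(\sqrt{-1})^{z^{(1)}}+(\sqrt{-1})^{z^{(2)}}+(\sqrt{-1})^{-z^{(1)}-z^{(2)}}$ for $z=(z^{(1)},z^{(2)})$, a direct check shows that $\chi_z(L)$ is constant on each block of the partition $\Z_4^2=N_{11}\cup N_{12}\cup N_{21}\cup N_{22}\cup Z$: namely $\chi_z(L)=4$ on $N_{22}=\{(0,0)\}$, $\chi_z(L)=2$ on $N_{11}$, $\chi_z(L)=2\sqrt{-1}$ on $N_{12}$, $\chi_z(L)=-2\sqrt{-1}$ on $N_{21}$, and $\chi_z(L)=0$ on $Z$ (the last two values being conjugate, reflecting that $N_{21}$ is the set of inverses of the elements of $N_{12}$).

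Finally I would assemble the product according to the type of $z$. If $z\in M_m$, some coordinate $z_j$ lies in $Z$, so $\chi_{z_j}(L)=0$, hence $\chi_z(T)=0$ and $|\chi_z(T+T^{(-1)})|^2=0$. If $z\in F_{m,u,v,w}$, then exactly $w$ of the $z_j$ lie in $N_{11}$, $u-w$ in $N_{12}$, $v-w$ in $N_{21}$, and $m-u-v+w$ in $N_{22}$, so
$$
\chi_z(T)=2^{w}\,(2\sqrt{-1})^{u-w}\,(-2\sqrt{-1})^{v-w}\,4^{\,m-u-v+w}=2^{\,2m-u-v+w}\,(\sqrt{-1})^{\,u-v},
$$
the powers of $2$ collapsing to $2m-u-v+w$ and the power of $\sqrt{-1}$ to $u-v$. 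When $u+v$ is odd, $u-v$ is odd, so $\chi_z(T)$ is purely imaginary and $\mathrm{Re}\,\chi_z(T)=0$, whence $|\chi_z(T+T^{(-1)})|^2=0$. When $u+v$ is even, $(\sqrt{-1})^{u-v}=\pm1$, so $\chi_z(T)$ is real with modulus $2^{\,2m-u-v+w}$, and therefore $|\chi_z(T+T^{(-1)})|^2=\left(2\cdot 2^{\,2m-u-v+w}\right)^2=4^{\,2m-u-v+w+1}$, as claimed.

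There is no serious obstacle; the argument is a short direct computation. The one step requiring care is the second --- verifying that $\chi_z(L)$ is genuinely constant on each of the five blocks of $\Z_4^2$, with values $4$, $2$, $2\sqrt{-1}$, $-2\sqrt{-1}$, $0$. That constancy, together with those particularly simple values, is exactly what makes the power-counting in the last step collapse cleanly: the exponent of $2$ becomes $2m-u-v+w$ and the power of $\sqrt{-1}$ becomes $u-v$, whose parity is that of $u+v$ and hence governs the two subcases.
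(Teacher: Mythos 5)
Your proposal is correct; I verified the block values you claim ($\chi_y(L)=4$ on $N_{22}$, $2$ on $N_{11}$, $2\sqrt{-1}$ on $N_{12}$, $-2\sqrt{-1}$ on $N_{21}$, $0$ on $Z$, with $N_{11}=\{(0,1),(0,3),(1,0),(3,0),(1,3),(3,1)\}$, $N_{12}=\{(1,1),(1,2),(2,1)\}$, $N_{21}=\{(3,3),(3,2),(2,3)\}$), and the bookkeeping $\chi_z(T)=2^{2m-u-v+w}(\sqrt{-1})^{u-v}$ for $z\in F_{m,u,v,w}$ is right, so the parity of $u+v$ does govern the two subcases exactly as you say. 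Your route differs from the paper's in a mild but genuine way: you evaluate the character of $T$ itself, using $\chi_z(T)=\prod_{j}\chi_{z_j}(L)$ and $\chi_z(T+T^{(-1)})=2\,\mathrm{Re}\,\chi_z(T)$, so you only need the five values of $\chi_y(L)$ on the blocks of $\Z_4^2$; the paper instead first expands in the group ring, $(T+T^{(-1)})(T+T^{(-1)})=2(TT^{(-1)}+TT)$ (which requires checking $LL=L^{(-1)}L^{(-1)}$), and then evaluates $\chi_z(TT^{(-1)})$ and $\chi_z(TT)$ blockwise via the tables \eqref{eqn-eqn9} and \eqref{eqn-eqn10}. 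Your version is finer and a bit more economical: it produces the exact complex value of $\chi_z(T)$ (not just the symmetrized squared modulus), which is extra structural information, and it avoids the auxiliary identity $TT=T^{(-1)}T^{(-1)}$. What the paper's formulation buys is consistency with the group-ring style used throughout (the quantities $TT^{(-1)}$ and $TT$ computed there are exactly the ones reused in Lemma~\ref{lem-TTdiff2} and Proposition~\ref{prop-TpTpinv2} for the difference spectrum of $T^{\pr}$), so the two computations share their building blocks. Either argument is a complete proof of the proposition.
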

\begin{proof}
Recall that $L=\{(0,0), (0,1), (1,0), (3,3)\} \subset \Z_4^2$ and $Z=\{(0,2), (2,0), (2,2)\} \subset \Z_4^2$. For $y \in \Z_4^2$, it is easy to verify that
\begin{equation}\label{eqn-eqn9}
\chi_y(LL^{(-1)})=\begin{cases}
  16 & \mbox{if $y=(0,0)$,} \\
  0 & \mbox{if $y \in Z$,} \\
  4 & \mbox{if $y \in \Z_4^2 \sm (\{(0,0)\} \cup Z)$,}
\end{cases}
\end{equation}
where the subset $\Z_4^2 \sm (\{(0,0)\} \cup Z)=\{ L_1L_1^{(-1)}+L_1L_2^{(-1)}+L_2L_1^{(-1)} \} \sm \{(0,0)\}$, and
\begin{equation}\label{eqn-eqn10}
\chi_y(LL)=\chi_y(L^{(-1)}L^{(-1)})=\begin{cases}
  16 & \mbox{if $y \in \{(0,0)\}$,} \\
  0 & \mbox{if $y \in Z$,} \\
  4 & \mbox{if $y \in \{ (0,1),(0,3),(1,0),(3,0),(1,3),(3,1) \}$,} \\
  -4 & \mbox{if $y \in \{ (1,1),(3,3),(1,2),(3,2),(2,1),(2,3) \}$,}
  \end{cases}
\end{equation}
where the subsets
\begin{align*}
\{ (0,1),(0,3),(1,0),(3,0),(1,3),(3,1) \}&=\{ L_1L_1^{(-1)} \} \sm \{(0,0)\}, \\
\{ (1,1),(3,3),(1,2),(3,2),(2,1),(2,3) \}&=\{L_1L_2^{(-1)}+L_2L_1^{(-1)}\}.
\end{align*}

It is straightforward to verify that $LL=L^{(-1)}L^{(-1)}$. Since $T=\prod_{j=1}^m L$, we have $TT^{(-1)}=(LL^{(-1)})^m$ and $TT=(LL)^m=(L^{(-1)}L^{(-1)})^m=T^{(-1)}T^{(-1)}$. Consequently,
$$
(T+T^{(-1)})(T+T^{(-1)})=2(TT^{(-1)}+TT).
$$
Therefore, for $z=(z_1,z_2,\ldots,z_m) \in \Z_4^{2m}$, in which $z_i \in \Z_4^2$, $1 \le i \le m$, we have
$$
|\chi_z(T+T^{(-1)})|^2=2(\chi_z(TT^{(-1)})+\chi_z(TT))=2(\prod_{i=1}^m\chi_{z_i}(LL^{(-1)})+\prod_{i=1}^m\chi_{z_i}(LL)).
$$
If $z \in M_{m}$, then by \eqref{eqn-eqn9} and \eqref{eqn-eqn10}, we have
\begin{equation}\label{eqn-eqn11}
|\chi_z(T+T^{(-1)})|^2=0, \quad \mbox{if $z \in M_{m}$}.
\end{equation}
If $z \notin M_{m}$, then $z \in F_{m,u,v,w}$ for some $0 \le u,v,w \le m$. Therefore, we have
\begin{align*}
|\{1 \le i \le m \mid z_i \in \{L_1L_1^{(-1)}\} \sm \{(0,0)\}\}|&=w, \\
|\{1 \le i \le m \mid z_i \in \{L_1L_2^{(-1)}+L_2L_1^{(-1)}\}\}|&=u+v-2w, \\
|\{1 \le i \le m \mid z_i \in \{L_2L_2^{(-1)}\}\}|&=m-u-v+w.
\end{align*}
Together with \eqref{eqn-eqn9} and \eqref{eqn-eqn10}, for $z \in F_{m,u,v,w}$, we have
\begin{align}\label{eqn-eqn12}
\begin{aligned}
|\chi_z(T+T^{(-1)})|^2&=2(\prod_{i=1}^m\chi_{z_i}(LL^{(-1)})+\prod_{i=1}^m\chi_{z_i}(LL)) \\
                      &=2(16^{m-u-v+w}4^{u+v-w}+16^{m-u-v+w}4^w(-4)^{u+v-2w}) \\
                      &=2\cdot4^{2m-u-v+w}(1+(-1)^{u+v})\\
                      &=\begin{cases}
                        0 & \mbox{if $z \in F_{m,u,v,w}$ and $u+v$ odd,} \\
                        4^{2m-u-v+w+1} & \mbox{if $z \in F_{m,u,v,w}$ and $u+v$ even.}
                      \end{cases}
\end{aligned}
\end{align}
Combining \eqref{eqn-eqn11} and \eqref{eqn-eqn12}, we complete the proof.
\end{proof}

In the following, we proceed to compute the multiset $[[T^{\pr}T^{\pr(-1)}]_g \mid g \in \Z_2 \times \Z_4^{2m}]$. Denote $Y=Z \cup \{(0,0)\}$ and define
$$
I=\{(0,1),(0,3),(1,0),(3,0),(1,1),(3,3)\} \subset \Z_4^2
$$
and
$$
O_m=\{(z_1,z_2,\ldots,z_m) \in \Z_4^{2m} \mid \mbox{each $z_i \in \Z_4^2$, and there exsits $z_j \in \Z_4^2 \sm (Y \cup I)$} \}.
$$

So far, we have proved several results containing structural information of the building blocks used in Theorem~\ref{thm-dircon2}, which is not known from \cite{LP}. Besids, the next lemma quotes a result of \cite{LP}, whose proof follows from a similar spirit as that of Proposition~\ref{prop-TTinv2}.

\begin{lemma}{\rm\cite[Lemma 6.4]{LP}}\label{lem-TTdiff2}
Let $T=\prod_{j=1}^m L$.
\begin{itemize}
\item[(1)] For $x=(x_1,x_2,\ldots,x_m) \in \Z_4^{2m}$, with $x_i \in \Z_4^2$,
$$
[TT^{(-1)}]_x=\begin{cases}
  0 & \mbox{if $x \in M_{m}$,} \\
  4^{l} & \mbox{if $x \notin M_{m}$ and $|\{1 \le i \le m \mid x_i \in \{(0,0)\}\}|=l$.}
\end{cases}
$$
\item[(2)] For $x=(x_1,x_2,\ldots,x_m) \in \Z_4^{2m}$, with $x_i \in \Z_4^2$,
$$
[TT]_x=[T^{(-1)}T^{(-1)}]_x=\begin{cases}
  0 & \mbox{if $x \in O_{m}$,} \\
  2^{l} & \mbox{if $x \notin O_{m}$ and $|\{1 \le i \le m \mid x_i \in I\}|=l$.}
\end{cases}
$$
\end{itemize}
\end{lemma}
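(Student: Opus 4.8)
The plan is to exploit the direct-product structure of $T=\prod_{j=1}^m L$ inside $\Z_4^{2m}=(\Z_4^2)^m$, which reduces both statements to a single-block computation in $\Z_4^2$, in the same spirit as the character factorization used in Proposition~\ref{prop-TTinv2}. For $x=(x_1,\ldots,x_m)$ with $x_i\in\Z_4^2$, the coefficient $[TT^{(-1)}]_x$ counts the ordered pairs $(a,b)\in T\times T$ with $a-b=x$; since $T$ is a Cartesian product, such a pair is chosen independently in each block, so the count factors as $[TT^{(-1)}]_x=\prod_{j=1}^m[LL^{(-1)}]_{x_j}$. The same reasoning applied to sums gives $[TT]_x=\prod_{j=1}^m[LL]_{x_j}$. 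Thus it suffices to tabulate the single-block coefficients $[LL^{(-1)}]_y$ and $[LL]_y$ for all $y\in\Z_4^2$ and then multiply over the $m$ blocks.

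For part~(1), I would enumerate the $16$ ordered differences $a-b$ with $a,b\in L$ and record that $[LL^{(-1)}]_{(0,0)}=4$ (from the four pairs $(a,a)$), that $[LL^{(-1)}]_y=0$ precisely for $y\in Z=\{(0,2),(2,0),(2,2)\}$, and that $[LL^{(-1)}]_y=1$ for every remaining $y$; this is the coefficient counterpart of \eqref{eqn-eqn9}. Substituting into the blockwise product, a single block $x_j\in Z$ already forces $[TT^{(-1)}]_x=0$, and $x\in M_m$ is by definition exactly the event that some block lies in $Z$. Otherwise every factor equals $4$ (when $x_j=(0,0)$) or $1$, so the product is $4^l$ with $l=|\{1\le j\le m\mid x_j=(0,0)\}|$, which is the claimed value.

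For part~(2), I would enumerate the $16$ ordered sums $a+b$ with $a,b\in L$ and record that $[LL]_y=1$ for $y\in Y=Z\cup\{(0,0)\}$, that $[LL]_y=2$ for $y\in I=\{(0,1),(0,3),(1,0),(3,0),(1,1),(3,3)\}$, and that $[LL]_y=0$ for $y\in\Z_4^2\setminus(Y\cup I)$; this is the coefficient counterpart of \eqref{eqn-eqn10}. Feeding this into the product, a single block in $\Z_4^2\setminus(Y\cup I)$ forces $[TT]_x=0$, and this is precisely the condition $x\in O_m$. Otherwise each factor is $1$ (for $x_j\in Y$) or $2$ (for $x_j\in I$), giving $2^l$ with $l=|\{1\le j\le m\mid x_j\in I\}|$. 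For the identity $[TT]_x=[T^{(-1)}T^{(-1)}]_x$ I would observe that this single-block table is invariant under $y\mapsto -y$, since both $Y$ and $I$ are closed under negation; equivalently $[LL]_y=[LL]_{-y}=[L^{(-1)}L^{(-1)}]_y$, so $LL=L^{(-1)}L^{(-1)}$ in $\Z[\Z_4^2]$, and the blockwise factorization then yields $TT=T^{(-1)}T^{(-1)}$.

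The only genuinely structural step is the blockwise factorization of the coefficients; once it is in place, the remainder is a finite tabulation over the $16$ elements of $\Z_4^2$. Accordingly, the main point to get right is the bookkeeping: verifying that the single-block zero-patterns coincide exactly with $Z$ and with $\Z_4^2\setminus(Y\cup I)$, so that the vanishing cases are captured precisely by $x\in M_m$ and $x\in O_m$, and that the surviving exponent $l$ counts the correct distinguished blocks. I would also take care to treat $LL^{(-1)}$ and $LL$ as genuine group-ring products counting ordered pairs, so that the multiplicities (the $4$ at $(0,0)$ in $LL^{(-1)}$ and the $2$ on $I$ in $LL$) come out correctly.
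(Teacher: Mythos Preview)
Your proposal is correct and matches the paper's intent. The paper does not actually prove this lemma in-line; it quotes it from \cite[Lemma~6.4]{LP} and only remarks that the proof ``follows from a similar spirit as that of Proposition~\ref{prop-TTinv2}.'' Your argument makes that spirit explicit: you factor the coefficient $[TT^{(-1)}]_x$ (resp.\ $[TT]_x$) as $\prod_{j=1}^m [LL^{(-1)}]_{x_j}$ (resp.\ $\prod_{j=1}^m [LL]_{x_j}$) using the Cartesian-product structure of $T$, and then reduce to the single-block tables in $\Z_4^2$ that are the coefficient analogues of \eqref{eqn-eqn9} and \eqref{eqn-eqn10}. The tabulations you describe are accurate, the identification of the zero-patterns with $Z$ and with $\Z_4^2\setminus(Y\cup I)$ is exactly what yields the conditions $x\in M_m$ and $x\in O_m$, and your symmetry argument for $LL=L^{(-1)}L^{(-1)}$ via closure of $Y$ and $I$ under negation is clean.
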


Now we can compute the multiset $[[T^{\pr}T^{\pr(-1)}]_g \mid g \in \Z_2 \times \Z_4^{2m}]$.

\begin{proposition}\label{prop-TpTpinv2}
Let $T^{\pr}$ be the subset of $\Z_2 \times \Z_4^{2m}$ defined in \eqref{eqn-Tpdircon2}, then we have
\begin{align*}
&[[T^{\pr}T^{\pr(-1)}]_g \mid g \in \Z_2 \times \Z_4^{2m}] \\
=&[0\lan 2^{4m+1}-10^{m}-13^{m} \ran, 2^l\lan 12^{m-\frac{l-1}{2}}\binom{m}{\frac{l-1}{2}}+2^{2m-l+1}3^{l-1}\binom{m}{l-1} \ran \mid 1 \le l \le m+1, \mbox{$l$ odd}] \\
&\cup [2^l\lan 2^{2m-l+1}3^{l-1}\binom{m}{l-1} \ran \mid 2 \le l \le m+1, \mbox{$l$ even}] \\
&\cup [2^l\lan 12^{m-\frac{l-1}{2}}\binom{m}{\frac{l-1}{2}} \ran \mid m+2 \le l \le 2m+1, \mbox{$l$ odd}].
\end{align*}
\end{proposition}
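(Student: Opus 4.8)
The plan is to evaluate $T^{\pr}T^{\pr(-1)}$ in the group ring $\Z[\Z_2\times\Z_4^{2m}]=\Z[\Z_2]\otimes\Z[\Z_4^{2m}]$ and then read off its coefficients layer by layer using Lemma~\ref{lem-TTdiff2}. Identifying $T^{\pr}$ with $e_0\otimes T+e_1\otimes T^{(-1)}$, where $e_0,e_1$ denote the two elements of $\Z_2$, and using $e_1^{(-1)}=e_1$ together with $(T^{(-1)})^{(-1)}=T$, a direct expansion yields
$$
T^{\pr}T^{\pr(-1)}=e_0\otimes(TT^{(-1)}+T^{(-1)}T)+e_1\otimes(TT+T^{(-1)}T^{(-1)}).
$$
Since $TT^{(-1)}=T^{(-1)}T$ holds for any subset of an abelian group, and $TT=T^{(-1)}T^{(-1)}$ was already verified in the proof of Proposition~\ref{prop-TTinv2}, this collapses to $T^{\pr}T^{\pr(-1)}=2\,e_0\otimes TT^{(-1)}+2\,e_1\otimes TT$. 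Hence $[T^{\pr}T^{\pr(-1)}]_{(0,x)}=2[TT^{(-1)}]_x$ and $[T^{\pr}T^{\pr(-1)}]_{(1,x)}=2[TT]_x$ for every $x\in\Z_4^{2m}$, so the task reduces to counting, for each prescribed value, how many $x$ attain it in the two difference spectra supplied by Lemma~\ref{lem-TTdiff2}, scaling those values by $2$, and merging the two counts.

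For the layer indexed by $e_0$: Lemma~\ref{lem-TTdiff2}(1) says $[TT^{(-1)}]_x$ vanishes on $M_{m}$, a set of $16^m-13^m$ elements, and equals $4^l$ on its complement, which consists exactly of the tuples all of whose coordinates lie in $\Z_4^2\sm Z$ (a $13$-element set, one element being $(0,0)$), where $l$ is the number of coordinates equal to $(0,0)$. The number of such tuples with exactly $l$ coordinates equal to $(0,0)$ is $\binom{m}{l}12^{m-l}$, and they contribute the value $2\cdot4^l=2^{2l+1}$. Re-indexing by the odd integer $l'=2l+1\in\{1,3,\dots,2m+1\}$, this layer produces the value $2^{l'}$ with multiplicity $12^{m-(l'-1)/2}\binom{m}{(l'-1)/2}$, together with the value $0$ with multiplicity $16^m-13^m$.

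For the layer indexed by $e_1$: Lemma~\ref{lem-TTdiff2}(2) says $[TT]_x$ vanishes on $O_{m}$, a set of $16^m-10^m$ elements, and equals $2^l$ on its complement, which consists of the tuples all of whose coordinates lie in $Y\cup I$, where $|Y|=4$, $|I|=6$ and $Y\cap I=\es$, and $l$ is the number of coordinates lying in $I$. The number of such tuples with exactly $l$ coordinates in $I$ is $\binom{m}{l}6^l4^{m-l}$, contributing the value $2\cdot2^l=2^{l+1}$; writing $6^l4^{m-l}=2^{2m-l}3^l$ and re-indexing by $l'=l+1\in\{1,\dots,m+1\}$, where now every parity occurs, this layer produces the value $2^{l'}$ with multiplicity $2^{2m-l'+1}3^{l'-1}\binom{m}{l'-1}$, together with the value $0$ with multiplicity $16^m-10^m$.

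Merging the two layers finishes the proof: the value $0$ occurs $(16^m-13^m)+(16^m-10^m)=2^{4m+1}-10^m-13^m$ times; a value $2^l$ with $l$ odd and $1\le l\le m+1$ receives a contribution from both layers, hence multiplicity $12^{m-(l-1)/2}\binom{m}{(l-1)/2}+2^{2m-l+1}3^{l-1}\binom{m}{l-1}$; a value $2^l$ with $l$ even and $2\le l\le m+1$ comes only from the $e_1$-layer; and a value $2^l$ with $l$ odd and $m+2\le l\le 2m+1$ comes only from the $e_0$-layer. These are precisely the three families listed in the asserted multiset. The computation itself is routine; the one place that genuinely requires care --- and hence the main obstacle --- is the boundary bookkeeping where the index ranges of the two layers overlap (the $e_1$-layer stops at $l=m+1$, while the $e_0$-layer continues in odd steps up to $l=2m+1$), making sure each multiplicity is assigned to exactly one of the three listed families with nothing lost or double-counted.
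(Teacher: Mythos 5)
Your proof is correct and follows essentially the same route as the paper: the identity $T^{\pr}T^{\pr(-1)}=2\bigl(\sum_{x\in[TT^{(-1)}]}(0,x)+\sum_{x\in[TT]}(1,x)\bigr)$ is exactly the paper's first step, and the two layer spectra you extract from Lemma~\ref{lem-TTdiff2} coincide with \eqref{eqn-eqn14} and \eqref{eqn-eqn15}. You merely spell out the final merging and re-indexing of exponents more explicitly than the paper does, which is a harmless elaboration rather than a different method.
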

\begin{proof}
Note that
\begin{equation}\label{eqn-eqn13}
T^{\pr}T^{\pr(-1)}=2\sum_{x \in [TT^{(-1)}]}(0,x)+\sum_{x \in [TT+T^{(-1)}T^{(-1)}]} (1,x)=2(\sum_{x \in [TT^{(-1)}]}(0,x)+\sum_{x \in [TT]}(1,x)),
\end{equation}
where $T=\prod_{j=1}^m L$. It suffices to determine the two multisets $[[TT^{(-1)}]_x \mid x \in \Z_4^{2m}]$ and $[[TT]_x \mid x \in \Z_4^{2m}]$.

By Lemma~\ref{lem-TTdiff2}(1), we have
\begin{equation}\label{eqn-eqn14}
[[TT^{(-1)}]_x \mid x \in \Z_4^{2m}]=[ 0 \lan 4^{2m}-13^{m} \ran, 4^l \lan 12^{m-l}\binom{m}{l}\ran \mid 0 \le l \le m ].
\end{equation}

According to Lemma~\ref{lem-TTdiff2}(2), we have
\begin{equation}\label{eqn-eqn15}
[[TT]_x \mid x \in \Z_4^{2m} ]=[ 0 \lan4^{2m}-10^{m}\ran, 2^l\lan 6^l4^{m-l}\binom{m}{l}\ran \mid 0 \le l \le m ].
\end{equation}
Combining \eqref{eqn-eqn13}, \eqref{eqn-eqn14} and \eqref{eqn-eqn15}, we complete the proof.
\end{proof}

Now we are ready to prove Theorem~\ref{thm-dircon2}.

\begin{proof}[Proof of Theorem~\ref{thm-dircon2}]
Applying Propositions~\ref{prop-lifting},~\ref{prop-S0S0invS1S1inv2} and~\ref{prop-TTinv2}, we derive that $S^{\pr}$ and $T^{\pr}$ form a primitive formally dual pair in $\Z_2 \times \Z_4^{2m}$. The multiset $[[T^{\pr}T^{\pr(-1)}]_g \mid g \in \Z_2 \times \Z_4^{2m}]$ follows from Proposition~\ref{prop-TpTpinv2}.
\end{proof}

\section{Conclusion}\label{sec4}

In this paper, we gave a direct construction of primitive formally dual pairs having subsets with unequal sizes in $\Z_2 \times \Z_4^{2m}$. While the derived infinite family had been discovered in \cite{LP} using a recursive approach, the new direct construction provided more detailed information about the primitive formally dual pairs. This advantage viewpoint leads to the difference spectrum of $T^{\pr}$ in Theorem~\ref{thm-dircon2}, which is not known before.

The formally dual pair indicates how one can form periodic configurations by taking the union of translations of a given lattice. In this sense, our constructions of formally dual pairs lead to schemes generating candidates of energy-minimizing periodic configurations.

Finally, we mention four open problems which seem to be interesting.

\begin{itemize}
\item[(1)] We remark that the two direct constructions in Theorem~\ref{thm-dircon2} and \cite[Theorem 4.2]{LP} both exploited the \emph{Teichmuller sets} in \emph{Galois rings}, whose additive group are of the form $\Z_4^n$. Thus, our construction suggests the possibility of more direct constructions involving Teichmuller sets.
\item[(2)] We think the general lifting construction framework \eqref{eqn-lifting} deserves further investigation. In particular, it is worthy noting that the lifting construction framework resembles the so called Waterloo decomposition of Singer difference sets \cite{ADJP}. So far, all known examples of primitive formally dual pairs having subsets with unequal sizes live in groups of the form $\Z_2 \times \Z_4^{2m}$, where $m \ge 1$. An interesting open problem is to construct such primitive formally dual pairs in other finite abelian groups.
\item[(3)] We note that for $N \le 1000$, there are only three open cases of primitive formally dual pairs in cyclic group $\Z_N$ \cite[Remark 5.12]{LPS}. In particular, the smallest open case in cyclic groups having unequal size subsets belongs to $\Z_{600}$, where the two subsets have size $10$ and $60$. We expect that advanced technique like the field descent method \cite{LS,Sch} can be exploited to improve the nonexistence results in cyclic groups.
\end{itemize}

\section*{Acknowledgement}

Shuxing Li is supported by the Alexander von Humboldt Foundation.

\end{document}